\documentclass[11pt]{amsart}
\usepackage{amsmath,amsfonts,amssymb,amscd,amsthm,bm}
\usepackage{mathtools}
\usepackage{mathrsfs}
\usepackage{graphicx}
\usepackage{booktabs}
\usepackage{tabularx}
\usepackage[square, comma, sort&compress, numbers]{natbib}
\usepackage{tikz}
\usepackage{float}
\usepackage{hyperref}
\usetikzlibrary{positioning}

\allowdisplaybreaks
\newcolumntype{Y}{>{\raggedleft\arraybackslash}X}

\newcommand{\bz}{\mathbb{Z}}

\def\id{\text{\rm Id\,}}

\def\dim{\text{\rm dim\,}}

\newcommand{\eqdeg}[1]{#1\mbox{\rm -}\deg}

\newcommand{\amal}[5]{#1\prescript{#4}{}\times_{#3}^{#5}#2}
\newtheorem{theorem}{Theorem}[section]

\newtheorem{lemma}{Lemma}[section]
\newtheorem{corollary}{Corollary}[section]
\newtheorem{condition}{Condition}[section]

\newtheorem{remark}{Remark}[section]
\begin{document}
\title{Global Bifurcation and Symmetry of Periodic Inventory Oscillations in Ring Supply-Chain Networks}

\author{Shi Yu}
\address{Math Department, Odessa College, 201 W University Blvd, Odessa, TX 79764, USA}
\email{syu@odessa.edu}

\begin{abstract}
This paper studies periodic inventory oscillations in delayed supply-chain networks of identical warehouses arranged in a ring.  After reducing the inventory--replenishment system to a second-order delay equation, we formulate the periodic problem as an equivariant compact perturbation of the identity, with symmetries generated by the ring structure, the odd nonlinearity, and time translations. Using equivariant degree theory, we characterize critical parameter pairs, compute the associated crossing numbers, and construct a local bifurcation invariant whose nonvanishing guarantees nonconstant periodic solutions and provides information about their spatial and spatio-temporal symmetry types. By restricting the two-parameter problem to an admissible transversal curve, we further establish a global continuation alternative for the bifurcating branches. Examples with \(D_7\)- and \(D_8\)-symmetry illustrate the computation of critical parameters, bifurcation invariants, and admissible orbit types. Finally, Floquet theory is applied to small-amplitude periodic branches, yielding a leading-order stability criterion for nonresonant perturbation modes. The results provide a unified framework for describing the existence, symmetry, global continuation, and leading-order stability of periodic inventory oscillations in delayed ring supply-chain networks.
\end{abstract}

\subjclass[2020]{34K18, 47J15, 37G15, 34K13}

\keywords{Supply-chain networks, delay differential equations, periodic solutions, equivariant degree, global bifurcation, Floquet multipliers, spatio-temporal symmetry}

\maketitle

\section{Introduction}
In supply-chain networks, replenishment decisions are rarely based on the instantaneous state of the system. Inventory records, replenishment orders, and information received from neighboring locations typically reflect conditions from an earlier time. Delays in demand information, replenishment, and inter-location communication may therefore generate persistent oscillations in inventory and production levels; see, for example, \cite{Forrester1961,Hu1,Hu2,Sterman1989}. This mechanism is closely related to the bullwhip effect, in which relatively small variations in customer demand are amplified as they propagate through the supply chain \cite{Forrester1961,Hu1,LeePadmanabhanWhang1997}. Related symmetry and bifurcation phenomena in structured dynamical systems can be treated within the general framework of symmetric bifurcation theory; see \cite{GolSchSt}.

Motivated by this phenomenon, we study a delayed ring network of identical warehouses with local inventory feedback, delayed nearest-neighbor coordination, and nonlinear saturation. The ring architecture gives rise to rotational and reflectional symmetries, while the odd nonlinearity and time translations introduce additional sign and temporal symmetries. These structural properties make it possible to analyze periodic inventory oscillations within an equivariant bifurcation framework.

The main novelty of this paper is a symmetry-sensitive local and global bifurcation analysis for periodic solutions of the resulting delayed supply-chain model. In contrast to approaches that detect oscillations only through characteristic roots or ordinary bifurcation indices, the equivariant degree also distinguishes the spatial and spatio-temporal symmetry types of the bifurcating solutions. In particular, the method identifies possible homogeneous, alternating, and wave-like oscillatory patterns and provides a global continuation alternative for the corresponding solution branches.

More precisely, we first reduce the inventory--replenishment system to a second-order delay equation and reformulate the periodic problem as an equivariant compact perturbation of the identity. We then characterize the critical parameter values through purely imaginary characteristic roots and compute the associated crossing numbers on the spatio-temporal isotypical components. These contributions are combined in a local bifurcation invariant. A nonzero invariant yields the existence of nonconstant periodic solutions together with information about their possible orbit types. Finally, by restricting the two-parameter problem to an admissible transversal curve, we establish a global continuation result showing that a bifurcating branch cannot remain confined to a bounded neighborhood of the critical point.

The analysis therefore goes beyond the mere detection of periodic inventory oscillations by relating the delay parameters, the spatial structure of the network, and the symmetry of the resulting oscillatory patterns. Equivariant bifurcation and degree methods provide a natural framework for such problems because they retain information about isotropy and orbit types that is not captured by ordinary topological degree. The general theory of equivariant degree and its computational framework can be found in \cite{AED,SURVEY,diss}, while related applications to symmetric periodic and subharmonic solutions, reversible systems, ring configurations, delayed networks, and other equivariant differential equations include \cite{SAC,SAC2,GI4,GI5,Guo,2nd,SY2,LiuGarciaKrawcewicz2023,Garcia, Ghanem2025,Yu3,Yu2026Random,Crane2026}. Global continuation and Hopf-type phenomena for delay differential equations are also closely related to the approaches developed in \cite{HuWuJDE,HuWuJDDE,HuWuZou}. These works provide the broader theoretical context for the symmetry-sensitive local and global bifurcation analysis developed here.

\begin{figure}[htbp]
\centering
\begin{tikzpicture}[scale=1.15, every node/.style={font=\small}]

\def\n{8}
\pgfmathtruncatemacro{\nlast}{\n-1}
\def\radius{2.4}

\foreach \k in {1,...,\n}
{
    \node[circle, draw, thick, minimum size=9mm]
    (I\k) at ({90-360*(\k-1)/\n}:\radius) {\(I_{\k}\)};
}

\foreach \k in {1,...,\nlast}
{
    \pgfmathtruncatemacro{\j}{\k+1}
    \draw[<->, thick] (I\k) -- (I\j);
}
\draw[<->, thick] (I\n) -- (I1);

\draw[dashed, thick, bend left=18, ->] (I1) to node[above right] {\(\tau_c\)} (I2);
\draw[dashed, thick, bend left=18, ->] (I2) to node[right] {\(\tau_c\)} (I1);

\node[align=center] at (0,0) {Delayed nearest-neighbor\\coordination};

\pgfmathtruncatemacro{\bottomidx}{floor(\n/2)+1}
\node[draw, rounded corners, align=center, below=1.0cm of I\bottomidx, font=\small] (delay)
{Inventory information delay\\ \(I_k(t-\tau_d)\)};

\draw[->, thick] (delay) -- (I\bottomidx);

\end{tikzpicture}
\caption{A symmetric ring supply-chain network of \(n\) warehouses with delayed local information and delayed nearest-neighbor coordination. The closed-loop structure gives the model the natural spatial symmetry \(D_n\).}
\label{fig:ring-supply-chain}
\end{figure}
The delayed ring supply-chain structure is illustrated in
Figure~\ref{fig:ring-supply-chain}. We consider a ring network of
\(n\) warehouses governed by
\begin{equation}\label{eq:modified-system}
\begin{cases}
\dot I_k(t)
=
P_k(t-\tau_p)-D_k(t)
-\delta\bigl(I_k(t)-S_k\bigr),\\[1mm]
\begin{aligned}
\dot P_k(t)
={}&-a\bigl(P_k(t)-P_0\bigr)
+b\bigl(S_k-I_k(t-\tau_d)\bigr)\\
&+c\bigl(I_{k+1}(t-\tau_c)-2I_k(t)
+I_{k-1}(t-\tau_c)\bigr)\\
&-\beta\bigl(I_k(t)-S_k\bigr)^3,
\end{aligned}
\end{cases}
\qquad k=1,\ldots,n,
\end{equation}
where the indices are taken modulo \(n\), so that
\[
I_0=I_n,
\qquad
I_{n+1}=I_1.
\]
The state variables, reference quantities, and model parameters are listed in
Table~\ref{tab:model-quantities}.

\begin{table}[H]
\centering
\caption{Variables and parameters in the supply-chain model.}
\label{tab:model-quantities}
\begin{tabular}{>{\centering\arraybackslash}p{2.2cm}p{10.0cm}}
\toprule
Symbol & Meaning \\
\midrule
\(I_k(t)\) & Inventory level at warehouse \(k\). \\
\(P_k(t)\) & Replenishment rate at warehouse \(k\). \\
\(D_k(t)\) & Demand rate at warehouse \(k\). \\
\(S_k\) & Desired inventory level at warehouse \(k\). \\
\(P_0\) & Baseline replenishment rate. \\
\(a>0\) & Replenishment relaxation rate. \\
\(\delta>0\) & Inventory correction rate relative to \(S_k\). \\
\(b>0\) & Delayed inventory-feedback strength. \\
\(c\geq0\) & Nearest-neighbor coordination strength. \\
\(\beta>0\) & Cubic saturation coefficient. \\
\(\tau_p\geq0\) & Replenishment transport delay. \\
\(\tau_d\geq0\) & Local inventory-information delay. \\
\(\tau_c\geq0\) & Neighbor-coordination delay. \\
\bottomrule
\end{tabular}
\end{table}

We assume that the warehouses are identical:
\[
S_k=S,
\qquad
D_k(t)=D_0,
\qquad
\tau_d=\tau_0,
\qquad
k=1,\ldots,n.
\]
We also assume
\[
\tau_p=0
\qquad\text{and}\qquad
P_0=D_0.
\]
Under these assumptions, the spatially homogeneous state
\[
I_k(t)=S,
\qquad
P_k(t)=D_0,
\qquad
k=1,\ldots,n,
\]
is an equilibrium of \eqref{eq:modified-system}.

Introduce the perturbation variables
\[
x_k(t)=I_k(t)-S,
\qquad
y_k(t)=P_k(t)-D_0.
\]
and the system becomes
\begin{equation}\label{eq:supply-chain-two-param}
\begin{cases}
\dot x_k(t)=y_k(t)-\delta x_k(t),\\[1mm]
\begin{aligned}
\dot y_k(t)
={}&-a y_k(t)-b x_k(t-\tau_0)\\
&+c\bigl(x_{k+1}(t-\tau_c)-2x_k(t)
+x_{k-1}(t-\tau_c)\bigr)
-\beta x_k^3(t).
\end{aligned}
\end{cases}
\end{equation}
Differentiating the first equation in \eqref{eq:supply-chain-two-param} and using
\[
y_k(t)=\dot x_k(t)+\delta x_k(t),
\]
we obtain
\[
\ddot x_k(t)+(a+\delta)\dot x_k(t)+a\delta x_k(t)+b x_k(t-\tau_0)
-c\bigl(x_{k+1}(t-\tau_c)-2x_k(t)+x_{k-1}(t-\tau_c)\bigr)
+\beta x_k^3(t)=0.
\]
Setting
\[
\eta=a+\delta,\qquad \alpha=a\delta,
\]
gives 
\begin{equation}\label{eq:reduced-supply-chain-system}
\begin{aligned}
\ddot x_k(t)
&+\eta\dot x_k(t)
+\alpha x_k(t)
+b x_k(t-\tau_0)\\
&-c\bigl(x_{k+1}(t-\tau_c)-2x_k(t)
+x_{k-1}(t-\tau_c)\bigr)
+\beta x_k^3(t)=0,
\qquad k=1,\ldots,n.
\end{aligned}
\end{equation}
For the bifurcation analysis, define the parameter domain
\[
\mathfrak P:=(0,\infty)\times[0,\infty), 
\]
then
\[
(b,\tau_0)\in\mathfrak P
\]
as the two bifurcation parameters and keep
\(a, \delta, c, \beta, \tau_c \)
fixed. 

Since our goal is to study periodic inventory oscillations, we impose the time-periodicity condition
\[
\quad x_k(t+2\pi)=x_k(t),\qquad k=1,\ldots,n.
\]

The periodic spatial boundary condition means that the supply-chain network is arranged as a closed loop. Hence \eqref{eq:reduced-supply-chain-system} is invariant under rotations and reflections of the warehouse indices, and the spatial symmetry group is the dihedral group \(D_n\). Since the reduced equation contains only odd powers of \(x_k\), it is also invariant under the sign change \(x\mapsto -x\), giving an additional \(\mathbb Z_2\)-symmetry. With damping and one-sided delays, the time symmetry available to periodic solutions is the time-translation group \(S^1\). Therefore, the relevant symmetry group is
\[
G=D_n\times \mathbb Z_2\times S^1.
\]

The main contribution of this paper is the integration of a delayed supply-chain ring model with an equivariant degree framework that establishes the existence of periodic oscillations and describes their spatial symmetry and global continuation.

\section{Function Space Setting and Symmetry Group Structure}
We study small periodic deviations from the homogeneous inventory equilibrium of the supply-chain ring. We write the unknown as
\[
x(t)=(x_1(t),\ldots,x_n(t)).
\]
Since we are interested in \(2\pi\)-periodic oscillations, we work in the space
\[
\mathcal X:=H^2_{\mathrm{per}}([0,2\pi],\mathbb R^n).
\]
We also set
\[
\mathcal Y:=L^2([0,2\pi],\mathbb R^n).
\]
The space \(\mathcal X\) is equipped with the inner product
\[
\langle x,z\rangle_{\mathcal X}=\int_0^{2\pi}\left(x''(t)\cdot z''(t)+x'(t)\cdot z'(t)+x(t)\cdot z(t)\right)\,dt.
\]
The induced norm is
\[
\|x\|_{\mathcal X}=\left(\int_0^{2\pi}\left(|x''(t)|^2+|x'(t)|^2+|x(t)|^2\right)\,dt\right)^{1/2}.
\]
For
\[
g=(\gamma,\iota,\theta)\in G,
\]
with
\[
\gamma\in D_n,
\qquad
\iota\in\mathbb Z_2,
\qquad
\theta\in S^1,
\]
we define the action on \(\mathcal X\) by
\[
(gx)(t)
=\gamma\,\iota\,x(t+\theta).
\]
The same action is used on \(\mathcal Y\). With this convention,
\(\mathcal X\) and \(\mathcal Y\) are \(G\)-representations.

The temporal Fourier decomposition of \(\mathcal X\) is
\[
\mathcal X
=
\overline{\bigoplus_{m=0}^{\infty}E_m},
\]
where
\[
E_0
=
\{a:a\in\mathbb R^n\},
\]
and, for \(m\geq1\),
\[
E_m
=
\left\{
a\cos(mt)+b\sin(mt):
a,b\in\mathbb R^n
\right\}.
\]
Thus \(E_m\) is the real temporal Fourier subspace associated with frequency
\(m\).

To describe the spatial decomposition, let
\[
\Lambda_n
=
\begin{cases}
\left\{0,1,\ldots,\dfrac{n-1}{2}\right\},
& n \text{ odd},\\[2mm]
\left\{0,1,\ldots,\dfrac{n}{2}\right\},
& n \text{ even}.
\end{cases}
\]
We consider the \(D_n\times\mathbb Z_2\)-isotypical decomposition
\[
\mathbb R^n
=
\bigoplus_{\ell\in\Lambda_n}V_\ell.
\]
For \(1\leq\ell<\frac n2\), 
the real spatial component \(V_\ell\) is generated by the paired complex
Fourier modes \(\ell\) and \(n-\ell\). The factor \(\mathbb Z_2\) acts on
each \(V_\ell\) through the sign representation.

Combining the temporal and spatial decompositions, we obtain
\[
E_m
=
\bigoplus_{\ell\in\Lambda_n}\mathcal V_{\ell,m},
\qquad m\geq0,
\]
where
\[
\mathcal V_{\ell,0}=V_\ell,
\]
and, for \(m\geq1\),
\[
\mathcal V_{\ell,m}
=
\mathcal W_m\otimes_{\mathbb R}V_\ell.
\]
Here \(\mathcal W_m\) denotes the two-dimensional real irreducible
representation of \(S^1\) associated with temporal frequency \(m\), which is  

\[
\operatorname{span}\{\cos(mt),\sin(mt)\}.
\]
Consequently,
\[
\mathcal X
=
\overline{
\bigoplus_{m=0}^{\infty}
\bigoplus_{\ell\in\Lambda_n}
\mathcal V_{\ell,m}
}.
\]
Define the operator \(\mathscr F_{b,\tau_0}:\mathcal X\to\mathcal Y\) by rewriting \eqref{eq:reduced-supply-chain-system} in the form
\[
x''(t)=\mathscr F_{b,\tau_0}(x)(t),
\]
where the \(k\)-th component of \(\mathscr F_{b,\tau_0}(x)\) is given by
\[
\mathscr F_{b,\tau_0,k}(x)(t)
=
-\eta x_k'(t)
-\alpha x_k(t)
-bx_k(t-\tau_0)
+c\bigl(x_{k+1}(t-\tau_c)-2x_k(t)+x_{k-1}(t-\tau_c)\bigr)
-\beta x_k^3(t).
\]
Since \(\mathcal X\)
is compactly embedded into \(C^1_{\mathrm{per}}([0,2\pi];\mathbb R^n),
\) the mapping \(x\longmapsto x^3\)
is well defined from \(\mathcal X\) into \(\mathcal Y\) and sends bounded subsets of \(\mathcal X\) into bounded subsets of \(\mathcal Y\). Moreover, the translation operators
\[
x(t)\longmapsto x(t-\tau_0),
\qquad
x(t)\longmapsto x(t-\tau_c),
\]
are bounded on both \(\mathcal X\) and \(\mathcal Y\). Therefore, for every \((b,\tau_0)\in\mathfrak P,
\) the operator \(\mathscr F_{b,\tau_0}\) is a well-defined mapping from \(\mathcal X\) into \(\mathcal Y\).

To reformulate the problem as a compact perturbation of the identity, introduce the linear operator
\[
\mathcal A:\mathcal X\to\mathcal Y,
\qquad
\mathcal A x=x''-x.
\]

\begin{lemma}\label{lem:A-isomorphism}
The operator \(\mathcal A:\mathcal X\to\mathcal Y\) is an isomorphism.
\end{lemma}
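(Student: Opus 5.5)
We will prove the statement by working in the temporal Fourier basis, where \(\mathcal A\) acts diagonally. Write \(x\in\mathcal Y\) as \(x(t)=\sum_{m\in\mathbb Z}\hat x_m e^{imt}\) with \(\hat x_m\in\mathbb C^n\) and \(\hat x_{-m}=\overline{\hat x_m}\). By Parseval, the norm of \(\mathcal X\) is equivalent to the weighted norm
\[
\|x\|_{\mathcal X}^2=2\pi\sum_{m\in\mathbb Z}(1+m^2+m^4)|\hat x_m|^2,
\]
and \(\|x\|_{\mathcal Y}^2=2\pi\sum_m|\hat x_m|^2\). For \(x\in\mathcal X\) we have \((x''-x)^\wedge_m=-(1+m^2)\hat x_m\). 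Equivalently, \(\mathcal A\) acts on each real subspace \(E_m\) as multiplication by \(-(1+m^2)\). In particular \(\mathcal A\) commutes with the \(G\)-action, although this is not needed for the lemma.

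\textbf{Boundedness.} Since \((1+m^2)^2\le 2(1+m^2+m^4)\), we get \(\|\mathcal Ax\|_{\mathcal Y}\le\sqrt2\,\|x\|_{\mathcal X}\). Alternatively, this follows directly from the triangle inequality, because \(\|x''\|_{L^2}\) and \(\|x\|_{L^2}\) are both bounded by \(\|x\|_{\mathcal X}\).

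\textbf{Injectivity.} If \(\mathcal Ax=0\), then \(-(1+m^2)\hat x_m=0\) for every \(m\). Since \(1+m^2\neq0\), all coefficients vanish and \(x=0\). One can also argue without Fourier series: multiply \(x''=x\) by \(x\) and integrate by parts using periodicity. This gives \(-\|x'\|^2=\|x\|^2\), hence \(x=0\).

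\textbf{Surjectivity with a bounded inverse.} Given \(f\in\mathcal Y\), define \(\hat x_m:=-\hat f_m/(1+m^2)\). These coefficients still satisfy the reality condition \(\hat x_{-m}=\overline{\hat x_m}\). Moreover
\[
\sum_m(1+m^2+m^4)|\hat x_m|^2\le\sum_m|\hat f_m|^2<\infty,
\]
because \(1+m^2+m^4\le(1+m^2)^2\). Hence \(x\in\mathcal X\), \(\mathcal Ax=f\), and \(\|\mathcal A^{-1}f\|_{\mathcal X}\le\|f\|_{\mathcal Y}\). Thus \(\mathcal A\) is a bounded bijection with bounded inverse. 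Continuity of the inverse would also follow from the open mapping theorem.

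\textbf{Main obstacle.} The only step requiring care is justifying the Fourier characterization of \(H^2_{\mathrm{per}}\). Specifically, \(x\in H^2_{\mathrm{per}}\) if and only if \(\sum_m m^4|\hat x_m|^2<\infty\), and in that case the weak second derivative has coefficients \(-m^2\hat x_m\). This is standard: integrate by parts against \(e^{-imt}\), where periodicity removes the boundary terms. I would invoke it as a known fact rather than reprove it.
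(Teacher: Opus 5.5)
Your proposal is correct and takes the same route as the paper: diagonalize \(\mathcal A\) in the temporal Fourier basis, where it acts on each mode by the nonzero multiplier \(-(1+m^2)\), and invert it by dividing by \(1+m^2\). Your explicit bounds \(\|\mathcal A x\|_{\mathcal Y}\le\sqrt2\,\|x\|_{\mathcal X}\) and \(\|\mathcal A^{-1}f\|_{\mathcal X}\le\|f\|_{\mathcal Y}\), along with the reality condition and the Fourier characterization of \(H^2_{\mathrm{per}}\), make precise what the paper only asserts when it says the inverse is bounded.
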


\begin{proof}
Let
\[
x(t)=a_0+\sum_{m=1}^{\infty}\big(a_m\cos(mt)+b_m\sin(mt)\big)
\]
be the Fourier expansion of \(x\). Then
\[
\mathcal A x=-a_0-\sum_{m=1}^{\infty}(m^2+1)\big(a_m\cos(mt)+b_m\sin(mt)\big).
\]
Since \(m^2+1\neq 0\) for every \(m\geq 0\), no Fourier mode lies in the kernel of \(\mathcal A\). Moreover, division by \(m^2+1\) gives a bounded inverse from \(\mathcal Y\) into \(\mathcal X\). Hence \(\mathcal A\) is an isomorphism.
\end{proof}

Let
\[
\mathfrak i:\mathcal X\hookrightarrow C_{\mathrm{per}}^1(S^1;\mathbb R^n)
\]
denote the natural embedding, and define the associated Nemytskii operator
\[
N_{\mathscr F_{b,\tau_0}}:
C_{\mathrm{per}}^1(S^1;\mathbb R^n)
\longrightarrow \mathcal Y
\]
by
\[
N_{\mathscr F_{b,\tau_0}}(v)(t)=\mathscr F_{b,\tau_0}(v)(t).
\]

Then the equation
\[
x''=\mathscr F_{b,\tau_0}(x)
\]
is equivalent to
\[
\mathcal A x
=
N_{\mathscr F_{b,\tau_0}}(\mathfrak ix)-x.
\]

Using the inverse of \(\mathcal A\), define
\[
\Psi:\mathfrak P\times\mathcal X\to\mathcal X,\qquad \Psi(b,\tau_0,x)=\Psi_{b,\tau_0}(x),
\]
where
\[
\Psi_{b,\tau_0}(x):=x-\mathcal A^{-1}\big(N_{\mathscr F_{b,\tau_0}}(\mathfrak ix)-x\big).
\]

Therefore, the zeros of \(\Psi_{b,\tau_0}\) are exactly the \(2\pi\)-periodic solutions of \eqref{eq:reduced-supply-chain-system} for the given parameter pair \((b,\tau_0)\).

\begin{theorem}\label{thm:Gamma-equivariance}
For every \((b,\tau_0)\in\mathfrak P\), the map
\[
\Psi_{b,\tau_0}:\mathcal X\to\mathcal X
\]
is \(G\)-equivariant.
Moreover, \(\Psi_{b,\tau_0}\) is a compact perturbation of the identity and depends continuously on \((b,\tau_0)\), uniformly on bounded subsets of \(\mathcal X\).
\end{theorem}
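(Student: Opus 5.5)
We would prove the three assertions separately: equivariance, compactness, and continuity in the parameters. In each case we work with the factorization
\[
\Psi_{b,\tau_0}=\id-\mathcal A^{-1}\circ\bigl(N_{\mathscr F_{b,\tau_0}}\circ\mathfrak i-\id\bigr),
\]
using Lemma~\ref{lem:A-isomorphism} for the bounded inverse \(\mathcal A^{-1}:\mathcal Y\to\mathcal X\).

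\textbf{Equivariance.} It suffices to show that \(\mathcal A\), \(\mathfrak i\) and \(N_{\mathscr F_{b,\tau_0}}\) each commute with the action \((gx)(t)=\gamma\iota x(t+\theta)\) of \(g=(\gamma,\iota,\theta)\in G\). Then \(\mathcal A^{-1}\) commutes as well, and the identity trivially does.

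\begin{itemize}
\item For \(\mathcal A\): the matrices \(\gamma\iota\) are constant, and \(d^2/dt^2\) commutes with time shifts.
\item For \(\mathfrak i\): the claim is immediate.
\item For \(N_{\mathscr F_{b,\tau_0}}\), time shifts: these commute with derivatives, with the delay operators \(x\mapsto x(\cdot-\tau_0)\) and \(x\mapsto x(\cdot-\tau_c)\), and with the pointwise cube.
\item For \(N_{\mathscr F_{b,\tau_0}}\), the sign \(\iota=\pm1\): every term of \(\mathscr F\) is odd in \(x\). The linear terms are odd trivially, and the cubic term is odd since \((-x)^3=-x^3\).
\item For \(N_{\mathscr F_{b,\tau_0}}\), the dihedral factor \(\gamma\in D_n\): \(\gamma\) acts by permuting indices \(k\mapsto k+1\) or \(k\mapsto -k \pmod n\). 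The discrete Laplacian \(x_{k+1}-2x_k+x_{k-1}\) is invariant under both, because the neighbor coupling uses the same delay \(\tau_c\) for \(k\pm1\). The remaining terms act componentwise with identical coefficients.
\end{itemize}

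We would check the reflection case explicitly by computing the \(k\)-th component of \(N(\gamma x)\) and comparing it with \((\gamma N(x))_k\).

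\textbf{Compactness.} Write \(\Psi_{b,\tau_0}=\id-\mathcal A^{-1}\circ L\circ\mathfrak i\), where \(L(v)=N_{\mathscr F_{b,\tau_0}}(v)-v\) maps \(C^1_{\mathrm{per}}\) into \(\mathcal Y\). The operator \(L\) involves at most first derivatives of \(v\), so it is continuous from \(C^1_{\mathrm{per}}\) into \(C^0_{\mathrm{per}}\subset\mathcal Y\). The cubic Nemytskii term is locally Lipschitz in the sup norm, and shifts are isometries. Moreover \(L\) maps bounded sets to bounded sets. Since \(\mathfrak i\) is compact (Rellich on the circle: \(H^2\hookrightarrow C^1\) compactly), the composition \(\mathcal A^{-1}L\mathfrak i\) is continuous and sends bounded subsets of \(\mathcal X\) to relatively compact subsets. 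Hence it is a completely continuous field.

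\textbf{Continuity in \((b,\tau_0)\), uniformly on bounded sets.} Fix a bounded set \(B\subset\mathcal X\) with \(\|x\|_{\mathcal X}\le R\). The \(b\)-dependence is linear: the difference \(\bigl(\Psi_{b,\tau_0}-\Psi_{b',\tau_0}\bigr)(x)\) equals \(\mathcal A^{-1}\) applied to \((b-b')x(\cdot-\tau_0)\), which is \(O(|b-b'|R)\).

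The \(\tau_0\)-dependence is the main obstacle. Translation is only strongly, not uniformly, continuous on \(L^2\), so we exploit the extra regularity. For \(x\in B\), the embedding into \(C^1\) gives \(\|x'\|_\infty\le CR\), hence
\[
\|x(\cdot-\tau_0)-x(\cdot-\tau_0')\|_{\mathcal Y}\le\sqrt{2\pi}\,CR\,|\tau_0-\tau_0'|,
\]
with the convention that \(\tau_0\) is read modulo \(2\pi\), which is harmless since the shift depends only on \(\tau_0\bmod2\pi\). Combining these estimates with the boundedness of \(b\mathcal A^{-1}\) yields
\[
\sup_{x\in B}\|\Psi_{b,\tau_0}(x)-\Psi_{b',\tau_0'}(x)\|_{\mathcal X}\to0
\quad\text{as }(b',\tau_0')\to(b,\tau_0).
\]
Joint continuity of \(\Psi\) on \(\mathfrak P\times\mathcal X\) then follows from the triangle inequality.
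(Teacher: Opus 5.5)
Your proposal is correct and follows essentially the same route as the paper. Equivariance comes from composing the equivariant maps \(\mathcal A^{-1}\), \(\mathfrak i\) and \(N_{\mathscr F_{b,\tau_0}}\). Compactness comes from the compact embedding \(\mathcal X\hookrightarrow C^1_{\mathrm{per}}\), followed by the continuous map \(L\) and the bounded inverse \(\mathcal A^{-1}\). Parameter continuity is handled through the delay term. Your explicit Lipschitz estimate \(\|x(\cdot-\tau_0)-x(\cdot-\tau_0')\|_{\mathcal Y}\le\sqrt{2\pi}\,CR\,|\tau_0-\tau_0'|\) justifies the uniformity in \(\tau_0\) more cleanly than the paper's appeal to continuity of translation on bounded subsets of \(C^1_{\mathrm{per}}\). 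That appeal yields uniformity only once the relative compactness of \(\mathfrak i(B)\) is also used.
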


\begin{proof}
Since \(\mathcal A\) has constant coefficients and acts only on the time variable, it commutes with time translations, with the coordinate permutations induced by \(D_n\), and with the sign action of \(\mathbb Z_2\). Consequently, both \(\mathcal A\) and \(\mathcal A^{-1}\) are \(G\)-equivariant. The embedding \(\mathfrak i\) is also \(G\)-equivariant, since
\[
\mathfrak i(gx)=g(\mathfrak ix),\qquad g\in G,\quad x\in\mathcal X.
\]

Since the components of \(\mathscr F_{b,\tau_0}\) have identical coefficients and the nearest-neighbor coupling is preserved by rotations and reflections of the ring, \(\mathscr F_{b,\tau_0}\) is \(D_n\)-equivariant. The oddness of the nonlinear terms implies equivariance under the sign action of \(\mathbb Z_2\). Furthermore, because the system is autonomous, differentiation and the delay operators commute with time translations. Consequently,
\[
\mathscr F_{b,\tau_0}(gu)=g\mathscr F_{b,\tau_0}(u),\qquad g\in G,\quad u\in C_{\mathrm{per}}^1([0,2\pi];\mathbb R^n).
\]

Therefore, using the equivariance of \(\mathfrak i\), \(\mathscr F_{b,\tau_0}\), and \(\mathcal A^{-1}\), we obtain
\[
\begin{aligned}
\Psi_{b,\tau_0}(gx)
&=gx-\mathcal A^{-1}\bigl(N_{\mathscr F_{b,\tau_0}}(\mathfrak i(gx))-gx\bigr)\\
&=gx-\mathcal A^{-1}g\bigl(N_{\mathscr F_{b,\tau_0}}(\mathfrak ix)-x\bigr)\\
&=g\Psi_{b,\tau_0}(x).
\end{aligned}
\]
Thus, \(\Psi_{b,\tau_0}\) is \(G\)-equivariant.

We next prove compactness. The embedding \(\mathfrak i\)
is compact. Hence, if \(B\subset\mathcal X\) is bounded, then \(\mathfrak i(B)\) is relatively compact in \(
C_{\mathrm{per}}^1([0,2\pi];\mathbb R^n). \)
The delay operators, differentiation, and the Nemytskii operators appearing in \(\mathscr F_{b,\tau_0}\) are continuous from \(C_{\mathrm{per}}^1([0,2\pi];\mathbb R^n)\)
into \(\mathcal Y\). It follows that
\[
x\longmapsto N_{\mathscr F_{b,\tau_0}}(\mathfrak ix)-x
\]
maps bounded subsets of \(\mathcal X\) into relatively compact subsets of \(\mathcal Y\). Since \(\mathcal A^{-1}\)
is bounded, the map
\[
\mathcal A^{-1}\bigl(N_{\mathscr F_{b,\tau_0}}(\mathfrak ix)-x\bigr)
\]
is compact. Therefore, \(\Psi_{b,\tau_0}\)
is a compact perturbation of the identity.

Finally, the translation map
\[
(\tau,x)\longmapsto x(\,\cdot-\tau)
\]
is continuous from bounded subsets of
\[
[0,\infty)\times C_{\mathrm{per}}^1([0,2\pi];\mathbb R^n)
\]
into \(C_{\mathrm{per}}^1([0,2\pi];\mathbb R^n)\). Therefore,
\[
(b,\tau_0,x)\longmapsto
\mathcal A^{-1}\bigl(
N_{\mathscr F_{b,\tau_0}}(\mathfrak ix)-x
\bigr)
\]
depends continuously on \((b,\tau_0)\), uniformly for \(x\) in bounded subsets
of \(\mathcal X\).
\end{proof}

Consequently, if \(\mathcal U\subset\mathcal X\) is a bounded open \(G\)-invariant set and
\[
\Psi_{b,\tau_0}(x)\neq 0\qquad \text{for all }x\in\partial\mathcal U,
\]
then the corresponding \(G\)-equivariant degree
\[
\eqdeg{G}\bigl(\Psi_{b,\tau_0},\mathcal U\bigr)
\]
is well defined. 
For a compact Lie group \(G\), let \(\Phi(G)\) denote the set of
conjugacy classes of closed subgroups of \(G\). For a closed subgroup
\(H\leq G\), define its normalizer by
\[
N(H):=\{g\in G:gHg^{-1}=H\},
\]
and its Weyl group by
\[
W(H):=N(H)/H.
\]

For \(k\geq0\), set
\[
\Phi_k(G)
:=
\left\{
(H)\in\Phi(G):
\dim W(H)=k
\right\}.
\]

We define
\[
A_0(G):=\mathbb Z[\Phi_0(G)],
\]
the free abelian group generated by the conjugacy classes
\((H)\) for which
\[
\dim W(H)=0.
\]
Since in this case \(W(H)\) is finite, \(A_0(G)\) is the usual
Burnside ring of \(G\).

Similarly, we define
\[
A_1(G):=\mathbb Z[\Phi_1(G)],
\]
the free abelian group generated by the conjugacy classes
\((H)\) satisfying
\[
\dim W(H)=1.
\]
The group \(A_1(G)\) is used to describe the one-parameter
equivariant degree associated with the nonstationary temporal modes.

For the stationary component, the ordinary equivariant degree takes values in the Burnside ring \(A_0(G)\), whereas the nonstationary temporal components are described by the one-parameter equivariant degree with values in \(A_1(G)\); see \cite{AED,diss}. By Theorem~\ref{thm:Gamma-equivariance} and the homotopy invariance of the corresponding equivariant degrees, these degree elements remain unchanged under continuous variations of
\((b,\tau_0)\) as long as
\[
\Psi_{b,\tau_0}(x)\neq0
\qquad
\text{for all }x\in\partial\mathcal U.
\]
Consequently, a change in the equivariant degree can occur only when the parameter pair \((b,\tau_0)\) passes through a critical set at which the admissibility condition fails.
\section{Local Bifurcation from the Equilibrium Branch}
For a sufficiently small \(G\)-invariant neighborhood \(\mathcal U\) of the origin, the nonlinear map has the same local equivariant degree as its linearization whenever the latter is invertible. Thus, if
\[
D_x\Psi_{b,\tau_0}(0):\mathcal X\to\mathcal X
\]
is invertible, then
\[
\eqdeg{G}\big(\Psi_{b,\tau_0},\mathcal U\big)
=
\eqdeg{G}\big(D_x\Psi_{b,\tau_0}(0),\mathcal U\big).
\]
This follows from the admissible homotopy between \(\Psi_{b,\tau_0}\) and its linear part near the isolated trivial solution; see \cite{AED,diss}. Hence possible bifurcation points are found from the loss of invertibility of the linearization.

From \eqref{eq:supply-chain-two-param}, the linearized equation is
\[
x_k''(t)+\eta x_k'(t)+\alpha x_k(t)+b x_k(t-\tau_0)-c\big(x_{k+1}(t-\tau_c)-2x_k(t)+x_{k-1}(t-\tau_c)\big)=0.
\]
Let
\[
\vartheta_\ell=\frac{2\pi\ell}{n},\qquad \ell=0,1,\ldots,n-1.
\]
For the \(\ell\)-th spatial Fourier mode,
\[
x_k(t)=y(t)e^{i\vartheta_\ell k},
\]
the coupling term satisfies
\[
x_{k+1}(t-\tau_c)+x_{k-1}(t-\tau_c)
=
2\cos(\vartheta_\ell)y(t-\tau_c)e^{i\vartheta_\ell k}.
\]
Thus the linearized equation reduces to
\[
y''(t)+\eta y'(t)+(\alpha+2c)y(t)+b y(t-\tau_0)-2c\cos(\vartheta_\ell)y(t-\tau_c)=0.
\]

Substituting
\[
y(t)=e^{\nu t}
\]
gives
\[
\Delta_\ell(\nu,b,\tau_0)=\nu^2+\eta\nu+\alpha+2c+b e^{-\nu\tau_0}
-2c\cos(\vartheta_\ell)e^{-\nu\tau_c}.
\]
A \(2\pi\)-periodic critical mode corresponds to
\[
\nu=im,\qquad m\in\mathbb Z.
\]

We obtain
\[
\Delta_\ell(im,b,\tau_0)=P_{\ell,m}(b,\tau_0)+iQ_{\ell,m}(b,\tau_0),
\]
where
\begin{equation}\label{eq:P-Q-equation}
\begin{aligned}
P_{\ell,m}(b,\tau_0)&=\alpha-m^2+2c+b\cos(m\tau_0)-2c\cos(\vartheta_\ell)\cos(m\tau_c),\\
Q_{\ell,m}(b,\tau_0)&=\eta m-b\sin(m\tau_0)+2c\cos(\vartheta_\ell)\sin(m\tau_c).
\end{aligned}
\end{equation}
Therefore, \(im\) is a characteristic exponent associated with the spatial mode \(\ell\) if and only if
\begin{equation}\label{eq:P-Q-are0}
P_{\ell,m}(b,\tau_0)=0,\qquad Q_{\ell,m}(b,\tau_0)=0.
\end{equation}

For \(m\neq0\), the critical parameter pairs \((b,\tau_0)\) are determined by the simultaneous system
\[
\begin{cases}
\alpha-m^2+2c+b\cos(m\tau_0)-2c\cos(\vartheta_\ell)\cos(m\tau_c)=0,\\
\eta m-b\sin(m\tau_0)+2c\cos(\vartheta_\ell)\sin(m\tau_c)=0.
\end{cases}
\]
For each fixed pair \((\ell,m)\), we define
\[
\mathcal T_{\ell,m}:=\left\{(b,\tau_0)\in\mathfrak P:P_{\ell,m}(b,\tau_0)=0,\quad Q_{\ell,m}(b,\tau_0)=0\right\}.
\]
Under the usual nondegeneracy assumptions, \(\mathcal T_{\ell,m}\) is locally discrete.

Equivalently, whenever \(\sin(m\tau_0)\neq0\), the second equation gives
\[
b=\frac{\eta m+2c\cos(\vartheta_\ell)\sin(m\tau_c)}{\sin(m\tau_0)}.
\]
Substitution into the first equation yields the scalar compatibility condition
\begin{equation}\label{eq:solve-for-tau0}
\alpha-m^2+2c-2c\cos(\vartheta_\ell)\cos(m\tau_c)+\bigl(\eta m+2c\cos(\vartheta_\ell)\sin(m\tau_c)\bigr)\cot(m\tau_0)=0.
\end{equation}
Thus, the admissible delays are the solutions of \eqref{eq:solve-for-tau0}. We denote these solutions by
\[
\tau_{0,\ell,m}^{(j)},\qquad j=1,2,\ldots.
\]
For each such delay, the corresponding critical value of \(b\) is given by
\begin{equation}\label{eq:solve-for-b}
b_{\ell,m}^{(j)}=\frac{
\eta m+2c\cos(\vartheta_\ell)\sin(m\tau_c)}{\sin\bigl(m\tau_{0,\ell,m}^{(j)}\bigr)
}.
\end{equation}
Accordingly, the critical set associated with the spatial mode \(\ell\) and
the temporal mode \(m\) can be written as
\[
\mathcal T_{\ell,m}
=\left\{
p_{\ell,m}^{(j)}
:=\bigl(b_{\ell,m}^{(j)},
\tau_{0,\ell,m}^{(j)}\bigr)
:j=1,2,\ldots
\right\}.
\]

We set
\[
\mathcal T:=\bigcup_{\ell\in\Lambda_n}\bigcup_{m=1}^{\infty}
\mathcal T_{\ell,m}.
\]
Hence, a parameter pair \((b,\tau_0)\) can be a local bifurcation point from the trivial branch only if
\[
(b,\tau_0)\in\mathcal T.
\]

Let
\[
p_*=(b_*,\tau_{0,*})\in\mathcal T
\]
be an isolated critical point. The local bifurcation invariant at \(p_*\) is denoted by
\[
\omega[p_*,0]\in A_1(G).
\]
 It is computed from the linearized operator \(D_x\Psi_p(0)\). Let \(\Sigma_-\) denote the negative spectrum of the finite-dimensional part of the linearized operator, and let \(\mathfrak m_{\ell,m}(\mu)\) be the multiplicity of \(\mathcal V_{\ell,m}\) in the eigenspace corresponding to \(\mu\in\Sigma_-\). We also define the isotypical crossing number
\[
\mathfrak t_{\ell,m}(b_*,\tau_{0,*})
\]
which records the signed crossing of the block \(\mathcal V_{\ell,m}\) at \(p_*\). Only blocks satisfying
\[
\Delta_\ell(im,b_*,\tau_{0,*})=0
\]
can contribute to this term.

For each irreducible isotypical component \(\mathcal V_{\ell,m}\), we define its basic degree by
\[
\deg_{\mathcal V_{\ell,m}}:=
\eqdeg{G}\left(-\id, B_\epsilon(\mathcal V_{\ell,m})\right),
\]
where \(B_\epsilon(\mathcal V_{\ell,m})\) denotes a sufficiently small ball centered at the origin in \(\mathcal V_{\ell,m}\).

For the zero temporal mode \(m=0\), the \(S^1\)-action is trivial. Consequently, the \(D_n\times\mathbb Z_2\times S^1\)-isotypical decomposition of the corresponding subspace coincides with its \(D_n\times\mathbb Z_2\)-isotypical decomposition. To simplify the notation, we therefore write
\[
\mathcal V_{\ell}:=\mathcal V_{\ell,0} \quad \text{ and } \quad\mathfrak m_\ell(\mu):=\mathfrak m_{\ell,0}(\mu).
\]
The temporal mode \(m=0\) corresponds to constant solutions, whereas the modes \(m\geq1\) correspond to nonconstant \(2\pi\)-periodic solutions. Accordingly, whenever the existence of nonconstant periodic solutions is asserted below, we identify the contribution to the bifurcation invariant arising from the modes \(m\geq1\) and consider the maximal finite orbit types associated with those modes.

We denote the set of critical mode indices at \(p_*\) by
\[
\mathcal K:=
\left\{
(\ell,m)\in\Lambda_n\times\mathbb N_+:
\Delta_\ell(im,b_*,\tau_{0,*})=0
\right\}.
\]
By the product formula for the equivariant degree \cite{AED,Yu3},
\begin{equation}\label{degree}
\omega[p_*,0]=
\prod_{\mu \in \Sigma_{-}} \prod_{\ell\in\Lambda_n}
\left(\deg_{\mathcal{V}_{\ell}}\right)^{\mathfrak m_{\ell}(\mu)}
\cdot
\sum_{(\ell,m)\in\mathcal K}
\mathfrak t_{\ell,m}(p_*)
\deg_{\mathcal V_{\ell,m}}
\in A_1(G).
\end{equation}
\begin{theorem}[Local bifurcation criterion]
\label{thm:new-local-bifurcation}
Let
\[
p_*=(b_*,\tau_{0,*})\in\mathcal T
\]
be an isolated critical parameter point. If
\[
\omega[p_*,0]\neq 0
\qquad\text{in }A_1(G),
\]
then \((p_*,0)\) is a bifurcation point of the trivial branch.

Moreover, if \((H)\) is maximal among the orbit types occurring with nonzero coefficient in the contribution to \(\omega[p_*,0]\) from a temporal
mode \(m>0\), then nonconstant \(2\pi\)-periodic solutions bifurcate from \((p_*,0)\) with isotropy containing a subgroup conjugate to \(H\).
\end{theorem}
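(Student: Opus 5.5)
We argue by contradiction, following the standard twisted-degree argument of \cite{AED,diss}. Since \(p_*\) is isolated in \(\mathcal T\), there is a small closed disc \(\mathcal D\subset\mathfrak P\) centred at \(p_*\) with \(\mathcal D\cap\mathcal T=\{p_*\}\). For \(p\in\partial\mathcal D\) the linearization \(D_x\Psi_p(0)\) is therefore invertible. The reason is that \(\Delta_\ell(im,p)\neq0\) for every \(m\geq1\), while for \(m=0\) we assume, as implicit in the definition of \(\Sigma_-\), that \(\alpha+2c+b-2c\cos\vartheta_\ell\neq0\) near \(p_*\). Choosing \(r>0\) small, the implicit function theorem applied on each compact arc of \(\partial\mathcal D\), together with Theorem~\ref{thm:Gamma-equivariance}, shows that \(x=0\) is the only zero of \(\Psi_p\) in \(\overline{B_r(0)}\) for \(p\in\partial\mathcal D\).

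Suppose that \((p_*,0)\) is not a bifurcation point. Shrinking \(\mathcal D\) and \(r\), we may then assume that \(\Psi(p,x)\neq0\) whenever \(p\in\mathcal D\) and \(0<\|x\|\le r\). Following \cite{AED}, we introduce an auxiliary \(G\)-invariant function \(\varsigma(p,x)\) that is positive on \(\partial\mathcal D\times\{x:\|x\|\le r\}\) and negative near \(\{p_*\}\times\{\|x\|=r\}\); for instance, \(\varsigma(p,x)=\|x\|-\tfrac r2\), cut off suitably. The map \(\Psi_\varsigma=(\varsigma,\Psi)\) on \(\Omega=\mathcal D^\circ\times B_r(0)\subset\mathbb R^2\oplus\mathcal X\) is then \(\Omega\)-admissible. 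Indeed, its zeros would be points with \(\Psi(p,x)=0\) and \(\|x\|=r/2\), and no such points exist under our assumption. Its twisted \(G\)-equivariant degree in \(A_1(G)\) is by definition the local invariant \(\omega[p_*,0]\). On the other hand, \(\Psi_\varsigma\) has no zeros in \(\Omega\), so the existence property of the degree forces \(\omega[p_*,0]=0\), contradicting the hypothesis.

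To identify \(\omega[p_*,0]\) with formula \eqref{degree}, we linearize: we homotope \(\Psi\) to \(D_x\Psi_p(0)\) on \(\Omega\), which is admissible for \(r\) small by Theorem~\ref{thm:Gamma-equivariance} and the smoothness of the Nemytskii operator at \(0\). We then split \(\mathcal X\) into the finitely many isotypical blocks \(\mathcal V_{\ell,m}\) on which \(D_x\Psi_p(0)\) is not close to the identity, plus a complement on which it is homotopic to \(\id\). The multiplicativity of the Burnside action of \(A_0(G)\) on \(A_1(G)\) yields the product of the basic degrees \(\deg_{\mathcal V_\ell}\) for the negative spectrum of the \(m=0\) part. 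For each \((\ell,m)\in\mathcal K\), the block is a complex \(1\times1\) symbol \(\Delta_\ell(im,p)\), and its winding number around \(\partial\mathcal D\) gives \(\mathfrak t_{\ell,m}(p_*)\,\deg_{\mathcal V_{\ell,m}}\). This step, verifying that the twisted degree of the complexified block on \(\partial\mathcal D\) equals the crossing number times the basic degree, is the main technical obstacle. We would cite the standard computation in \cite{AED,Yu3} rather than redo it.

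For the symmetry statement, we repeat the argument on fixed-point spaces. Let \((H)\) be maximal with nonzero coefficient coming from a mode \(m>0\). Suppose that no nonconstant solution near \((p_*,0)\) has isotropy containing a conjugate of \(H\). We restrict to \(\mathcal X^H\), where \(\Psi\) remains \(W(H)\)-equivariant. By the recurrence formula for the degree, the coefficient of \((H)\) in \(\omega[p_*,0]\) equals a degree computed from \(\Omega^H\), up to contributions of strictly larger orbit types. Maximality of \((H)\) removes those contributions, so this coefficient must vanish, which is a contradiction. Consequently there exist nonzero zeros \(x\) with \(G_x\supseteq H\) arbitrarily close to \((p_*,0)\). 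Because \(m>0\), the subgroup \(H\) does not contain the whole temporal circle \(S^1\), so these zeros are not constant in time, that is, they are nonconstant \(2\pi\)-periodic solutions.
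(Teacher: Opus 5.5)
Your proof is correct in outline, but it takes a different route from the paper's. The paper argues along a transversal segment. If no bifurcation occurs, \(x=0\) is the only zero of \(\Psi_p\) in a small ball for all \(p\in[p_-,p_+]\), so homotopy invariance gives \(\eqdeg{G}(\Psi_{p_-},\mathcal B_\epsilon)=\eqdeg{G}(\Psi_{p_+},\mathcal B_\epsilon)\), which the paper reads as \(\omega[p_*,0]=0\). You instead work on a full disc \(\mathcal D\) in the \((b,\tau_0)\)-plane with a complementing function. Then \((\varsigma,\Psi)\) maps an open subset of \(\mathbb R^2\oplus\mathcal X\) into \(\mathbb R\oplus\mathcal X\). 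That one free dimension is exactly what an \(A_1(G)\)-valued degree requires, with \((b,\tau_0)\) playing the role that (parameter, frequency) plays in the usual Hopf setting. The paper's argument is shorter, but the degrees it compares take values in \(A_0(G)\), and at a crossing with \(m\geq1\) they coincide automatically. The reason is that \(D_x\Psi_p(0)\) acts on each \(\mathcal V_{\ell,m}\) as the nonzero complex scalar \(-\Delta_\ell(im,p)/(m^2+1)\), which is equivariantly homotopic to the identity through isomorphisms. So the segment comparison does not by itself detect \(\omega[p_*,0]\), whereas your construction does. Your choice \(\varsigma=\|x\|-r/2\) has a further advantage: the zeros of \((\varsigma,\Psi)\) are exactly the nontrivial solutions with \(\|x\|=r/2\). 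The orbit-type claim therefore follows directly from the existence property, and maximality of \((H)\) is not even needed.

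Several details need repair.
\begin{enumerate}
\item Your verbal conditions on \(\varsigma\) (positive on \(\partial\mathcal D\times\overline{B_r(0)}\), negative near \(\{p_*\}\times\partial B_r(0)\)) have the signs reversed relative to your own example. The example is the right one: it is negative on the trivial branch and positive on \(\|x\|=r\). It is admissible without any non-bifurcation hypothesis, and any two such functions are joined by a convex admissible homotopy, so its degree is indeed the local invariant.
\item Your \(m=0\) assumption holds automatically, since \(\mu_{\ell,0}=-(\alpha+b+2c(1-\cos\vartheta_\ell))<0\).
\item The nonconstancy step is a non sequitur: \(G_x\supseteq H\) does not exclude \(G_x\supseteq S^1\). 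The correct reason is that invertibility of the \(m=0\) part rules out small nonzero equilibria. In fact, multiplying the stationary equation by \(x_k\) and summing shows there are none at all.
\item In the identification step, write \(\Delta_\ell(im,p)=z_{\ell,m}+be^{-im\tau_0}\) with \(z_{\ell,m}\) independent of \(p\). The winding number around \(\partial\mathcal D\) is then the sign of the Jacobian \(-mb\) of \((b,\tau_0)\mapsto be^{-im\tau_0}\), which is the same for every critical block. It agrees with the paper's \(\mathfrak t_{\ell,m}\), defined through the sign of \(\frac{d}{db}\operatorname{Re}\zeta\), only when the pair crosses left to right in \(b\), as it does in both examples. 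For a single critical block this does not affect nonvanishing, but the equality you cite should be stated with the winding-number crossing number.
\end{enumerate}
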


\begin{proof}
Assume, to the contrary, that no bifurcation occurs at \((p_*,0)\). Then there exist regular parameter values \(p_-\) and \(p_+\), lying on opposite sides of \(p_*\) along a prescribed transversal segment, and a sufficiently small
\(G\)-invariant ball
\[
\mathcal B_\epsilon\subset\mathcal X
\]
such that \(x=0\) is the only zero of \(\Psi_p\) in \(\mathcal B_\epsilon\) for every
\[
p\in[p_-,p_+].
\]
The family
\[
s\longmapsto \Psi_{(1-s)p_-+sp_+},
\qquad 0\leq s\leq1,
\]
is therefore an admissible \(G\)-equivariant homotopy. By homotopy invariance of
the equivariant degree,
\[
\eqdeg{G}\bigl(\Psi_{p_-},\mathcal B_\epsilon\bigr)
=
\eqdeg{G}\bigl(\Psi_{p_+},\mathcal B_\epsilon\bigr).
\]
Hence
\[
\omega[p_*,0]=0,
\]
contradicting the assumption
\[
\omega[p_*,0]\neq0. 
\]
Therefore, \((p_*,0)\) is a bifurcation point.

If \(m>0\), the corresponding temporal mode is nonconstant. The existence
property of the \(G\)-equivariant degree then yields nonconstant periodic solutions near \((p_*,0)\) whose isotropy contains a subgroup conjugate to \(H\).
\end{proof}

\begin{remark}
Since \(\omega[p_*,0]\) lies in \(A_1(G)\), it contains more information than the ordinary topological degree. In particular, it can distinguish spatial and spatio-temporal symmetry types of the bifurcating periodic solutions.
\end{remark}

\section{Global Continuation of Bifurcating Branches}
We now extend the local bifurcation result to a global continuation statement. Since the bifurcation parameter \(p=(b,\tau_0)\)
belongs to the space \(\mathfrak P\), we restrict the problem to a one-dimensional transversal curve through an isolated critical parameter point.

Let \(J\subset\mathbb R\)
be an open interval and let
\[
p_*=(b_*,\tau_{0,*})\in\mathcal T
\]
be isolated, and let
\[
\sigma\in C^1(J,\mathfrak P),
\qquad
\sigma(s)=\bigl(b(s),\tau_0(s)\bigr),
\]
be an admissible crossing curve satisfying \(
\sigma(s_*)=p_*\) for some \(s_*\in J\), and define the restricted family \(\Phi:J\times\mathcal X\longrightarrow\mathcal X\) by
\[
\Phi(s,u):=\Psi_{\sigma(s)}(u).
\]
Since \(G\) acts trivially on the parameter \(s\), the map \(\Phi(s,\cdot)\) is \(G\)-equivariant for every \(s\in J\).

Let
\[
\mathcal S_\sigma
=
\overline{
\left\{
(s,u)\in J\times\mathcal X:
\Phi(s,u)=0,\ u\neq0
\right\}
}.
\]
We denote by
\[
\mathcal C_\sigma(s_*,0)
\]
the connected component of \(\mathcal S_\sigma\) containing \((s_*,0)\). We then obtain the following theorem as a consequence of the Rabinowitz alternative~\cite{Rabinowitz1971}.

\begin{theorem}[Global bifurcation along a transversal]
\label{thm:global-continuation}
Let
\[
p_*=(b_*,\tau_{0,*})\in\mathcal T
\]
be an isolated critical parameter point. Assume that \(s_*\) is an isolated critical value of the restricted family
\[
\Phi(s,u)=\Psi_{\sigma(s)}(u)
\]
and that
\[
\omega[\sigma(s_*),0]\neq0 \qquad\text{in }A_1(G),
\]

Then \(\mathcal C_\sigma(s_*,0)\) contains nontrivial \(2\pi\)-periodic solutions. Moreover, at least one of the following alternatives occurs:
\begin{enumerate}
\item
\(\mathcal C_\sigma(s_*,0)\) is unbounded in \(J\times\mathcal X\);

\item
\(\mathcal C_\sigma(s_*,0)\) meets another critical point
\[
(\widehat s,0),
\quad
\widehat s\neq s_*,
\]
of the trivial branch;

\item
the closure of \(\mathcal C_\sigma(s_*,0)\) meets \(
\partial J\times\mathcal X.\)
\end{enumerate}
\end{theorem}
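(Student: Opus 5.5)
The argument is the standard Rabinowitz-type contradiction, run with the \(G\)-equivariant degree in place of the Leray--Schauder degree.

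\textbf{Existence of nontrivial solutions.} Restrict Theorem~\ref{thm:new-local-bifurcation} to the curve \(\sigma\). By Theorem~\ref{thm:Gamma-equivariance}, \(\Phi(s,\cdot)=\Psi_{\sigma(s)}\) is a \(G\)-equivariant compact perturbation of the identity. It depends continuously on \(s\), uniformly on bounded sets, because \(\sigma\in C^1\) and \(\Psi\) is continuous in \(p\). Since \(s_*\) is isolated, choose \(s_\pm\) close to \(s_*\) on either side so that \(D_u\Phi(s,0)\) is invertible for all \(s\in[s_-,s_+]\setminus\{s_*\}\). Then \(\omega[\sigma(s_*),0]\) is the jump of the local degrees at \(s_\pm\), computed by \eqref{degree}, and it is nonzero. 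The homotopy argument in the proof of Theorem~\ref{thm:new-local-bifurcation}, applied along \(\sigma\), shows that every neighborhood of \((s_*,0)\) contains zeros \((s,u)\) with \(u\neq0\). Hence \((s_*,0)\in\mathcal S_\sigma\), so \(\mathcal C_\sigma(s_*,0)\) is well defined and contains nontrivial solutions.

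\textbf{Global alternative.} Suppose all three alternatives fail. Then \(\mathcal C:=\mathcal C_\sigma(s_*,0)\) is bounded, its closure stays a positive distance from \(\partial J\times\mathcal X\), and its only trivial point is \((s_*,0)\). The key step is compactness: zeros of \(\Phi\) satisfy \(u=K(s,u)\) with \(K\) compact, so \(\mathcal S_\sigma\) is locally compact and bounded closed pieces of it are compact. Then apply the Whyburn separation lemma to a compact neighborhood of \(\mathcal C\) in \(\mathcal S_\sigma\), which yields a bounded open set \(\Omega\subset J\times\mathcal X\) with the following properties:
\begin{itemize}
\item \(\mathcal C\subset\Omega\) and \(\partial\Omega\cap\mathcal S_\sigma=\emptyset\);
\item \(\overline\Omega\subset J\times\mathcal X\);
\item \(\Omega\cap(J\times\{0\})\subset(s_*-\epsilon,s_*+\epsilon)\times\{0\}\).
\end{itemize}
Averaging over the compact group \(G\) (replace \(\Omega\) by \(G\Omega\), using that \(\mathcal S_\sigma\) is \(G\)-invariant) makes \(\Omega\) \(G\)-invariant, with \(G\) acting trivially on \(s\).

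\textbf{Contradiction.} Consider the augmented map \(\mathfrak F_\varsigma(s,u)=\bigl(\varsigma(s,u),\Phi(s,u)\bigr)\), where \(\varsigma\) is a \(G\)-invariant Urysohn-type function that is positive near the trivial set outside a small ball around \((s_*,0)\) and negative on \(\partial\Omega\) away from the trivial branch. This is the standard construction of the twisted degree in \cite{AED,diss}. The map is \(\Omega\)-admissible, so \(\eqdeg{G}(\mathfrak F_\varsigma,\Omega)\in A_1(G)\) is defined. On one hand, deforming \(\Omega\) in \(s\) beyond the extent of \(\mathcal C\), where no nontrivial zeros remain, gives \(\eqdeg{G}(\mathfrak F_\varsigma,\Omega)=0\). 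On the other hand, excision to an isolating cylinder around \((s_*,0)\) together with the splitting lemma gives \(\eqdeg{G}(\mathfrak F_\varsigma,\Omega)=\omega[\sigma(s_*),0]\). This contradicts \(\omega[\sigma(s_*),0]\neq0\).

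The hardest step will be this last identification, namely that the twisted degree on the isolating cylinder equals the invariant in \eqref{degree}. For it I would rely on the standard computation for the \(S^1\)-part cited in \cite{AED}, and I would check that the transversality built into ``admissible crossing curve'' is exactly what makes the crossing numbers \(\mathfrak t_{\ell,m}\) along \(\sigma\) agree with those used in \(\omega\).
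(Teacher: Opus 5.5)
Your argument follows the paper's route. You get nontrivial zeros from the local criterion, then seek a contradiction by evaluating the degree of the augmented map $(\varsigma,\Phi)$ twice: by excision to an isolating neighbourhood of $(s_*,0)$, supposedly giving $\omega[\sigma(s_*),0]$, and by deforming the complementing function to a nonvanishing constant, giving $0$. Your construction of the isolating set is more careful than the paper's. Whyburn's lemma only gives $\partial\Omega\cap\mathcal S_\sigma=\emptyset$, and you handle trivial zeros on $\partial\Omega$ through the sign of $\varsigma$. The paper instead requires $\Phi\neq0$ on all of $\partial\mathcal W$, which is impossible: the trivial line $J\times\{0\}$ passes through $(s_*,0)\in\mathcal W$ and must leave $\mathcal W$. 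Your zero computation should be a homotopy of $\varsigma$ to a constant with the sign $\varsigma$ has at trivial points, not a deformation of $\Omega$.

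The genuine gap is the step you flag as hardest, and it fails outright. $\mathfrak F_\varsigma$ maps $\mathbb R\times\mathcal X$ into $\mathbb R\times\mathcal X$, so it has no free parameter and its $G$-degree lies in $A_0(G)$, not $A_1(G)$. If $(H)\in\Phi_0(G)$, the central time-translation circle $S^1\le N(H)$ has connected, hence trivial, image in the finite group $W(H)$. So $S^1\le H$, and $\mathcal X^H\subset\mathcal X^{S^1}=E_0$ consists of constants. For a constant zero, multiplying the stationary equation by $u_k$ and summing gives $(\alpha+b)\sum_k u_k^2+c\sum_k(u_{k+1}-u_k)^2+\beta\sum_k u_k^4=0$, hence $u=0$. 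With the usual auxiliary function ($\varsigma(s,0)<0$ on the cylinder), every fixed-point map $\mathfrak F_\varsigma^H$ with $(H)\in\Phi_0(G)$ therefore has no zeros. The recurrence formula then gives local degree $0$, whatever $\omega[\sigma(s_*),0]$ is. Your existence step fails for the same reason. $\eqdeg{G}(\Phi(s,\cdot),B_\epsilon)$ only sees the stationary modes, where $\Delta_\ell(0,\sigma(s))=\alpha+b(s)+2c(1-\cos\vartheta_\ell)>0$, so nothing jumps between $s_-$ and $s_+$. Equivalently, on $\mathcal V_{\ell,m}$ with $m\ge1$ the linearization is multiplication by a complex number, and $\mathbb C\setminus\{0\}$ is connected.

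A better auxiliary function will not fix this, because the first conclusion of the statement generically fails on a prescribed curve. With the period frozen at $2\pi$, Lyapunov--Schmidt reduction on each relevant fixed-point subspace of the critical block gives $z\,h(s,|z|^2)=0$ with $h$ complex-valued. That is two real equations in the two real unknowns $(s,|z|^2)$. When $\partial_s h(s_*,0)$ and $\partial_t h(s_*,0)$ are $\mathbb R$-linearly independent, which is the generic case, the only solution near $(s_*,0)$ is $(s_*,0)$ itself. This matches the Floquet section, where the small branch lies over the specific curve $b(\varepsilon)=b_*+b_2\varepsilon^2+\cdots$, $\tau_0(\varepsilon)=\tau_{0,*}+\tau_2\varepsilon^2+\cdots$ fixed by the reduced equation. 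To get an $A_1(G)$-valued invariant you must keep both $b$ and $\tau_0$ free, so they play the roles of bifurcation parameter and frequency in the usual twisted-degree Hopf setting. That means an isolating cylinder $B_r(p_*)\times B_\rho(0)\subset\mathfrak P\times\mathcal X$ and $\mathcal F_\theta=(\theta,\Psi):\mathfrak P\times\mathcal X\to\mathbb R\times\mathcal X$, which has exactly one free parameter. With that change your Whyburn-plus-complementing-function argument goes through, but it yields the alternative for the component in $\mathfrak P\times\mathcal X$, not along $\sigma$. The paper's proof simply asserts the same identification ``by the complementing-function formula,'' so it does not supply the missing step either.
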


\begin{proof}
Since \(\omega[\sigma(s_*),0]\neq0,\)
the local bifurcation criterion implies that \((s_*,0)\) is a bifurcation point.
Hence \(\mathcal C_\sigma(s_*,0)\) contains nontrivial periodic solutions.

Assume, to the contrary, that none of the three alternatives occurs. Then \(\mathcal C_\sigma(s_*,0)\) is bounded in \(J\times\mathcal X\), its closure does not intersect \(\partial J\times\mathcal X\), and it does not meet any other critical point of the trivial branch.

Consequently, there exists a bounded open \(G\)-invariant set
\[
\mathcal W\subset J\times\mathcal X
\]
such that
\[
\mathcal C_\sigma(s_*,0)\subset\mathcal W,
\qquad
\overline{\mathcal W}\subset J\times\mathcal X,
\]
and
\[
\Phi(s,u)\neq0
\qquad
\text{for all }(s,u)\in\partial\mathcal W.
\]
We may choose \(\mathcal W\) so that \((s_*,0)\) is the only critical point of the trivial branch contained in \(\mathcal W\).

Choose \(s_*^-<s_*<s_*^+\)
such that \([s_*^-,s_*^+]\subset J\)
and \(s_*\) is the only critical value in this interval. Set
\[
K_*=[s_*^-,s_*^+]\times\{0\}\subset J\times\mathcal X.
\]
Equip \(J\times\mathcal X\) with the product norm
\[
\|(s,u)\|_{J\times\mathcal X}
=
|s|+\|u\|_{\mathcal X},
\]
and let
\[
d_*(s,u)
=
\operatorname{dist}\bigl((s,u),K_*\bigr).
\]
For \(\delta>0\) sufficiently small, define
\[
\mathcal U_*
=
\left\{
(s,u)\in\mathcal W:
d_*(s,u)<\delta
\right\}.
\]

Define the complementing function \(
\vartheta:\mathcal W\longrightarrow\mathbb R\)
by
\[
\vartheta(s,u)=
\begin{cases}
d_*(s,u)-\dfrac{\delta}{2},
&(s,u)\in\mathcal U_*,
\\[2mm]
\dfrac{\delta}{2},
&(s,u)\in\mathcal W\setminus\mathcal U_*.
\end{cases}
\]
The function \(d_*\) is continuous and \(G\)-invariant because \(G\) acts trivially on \(s\), fixes the trivial branch, and acts isometrically on \(\mathcal X\). Moreover, if
\[
(s,u)\in\partial\mathcal U_*\cap\mathcal W,
\]
then
\[
d_*(s,u)=\delta.
\]
Hence the two formulas defining \(\vartheta\) agree on
\(\partial\mathcal U_*\cap\mathcal W\), and therefore \(\vartheta\) is continuous and \(G\)-invariant.

Define the augmented map \(
\mathcal P:\mathcal W\longrightarrow\mathbb R\times\mathcal X\)
by
\[
\mathcal P(s,u)=
\bigl(\vartheta(s,u),\Phi(s,u)\bigr).
\]
 The map \(\mathcal P\) is continuous and \(G\)-equivariant. Furthermore, since
\[
\Phi(s,u)\neq0
\qquad
\text{on }\partial\mathcal W,
\]
the map \(\mathcal P\) has no zeros on \(\partial\mathcal W\). Thus, \(
\eqdeg{G}(\mathcal P,\mathcal W)\) is well defined.

If \(\mathcal P(s,u)=0,\) then \(\Phi(s,u)=0\)
and \(
d_*(s,u)=\frac{\delta}{2}.\)
Therefore all zeros of \(\mathcal P\) lie in \(\mathcal U_*\). By excision of the equivariant degree~\cite{AED},
\[
\eqdeg{G}(\mathcal P,\mathcal W)
=
\eqdeg{G}(\mathcal P,\mathcal U_*).
\]
By the complementing-function formula for the equivariant degree, the latter degree equals the local bifurcation invariant:
\[
\eqdeg{G}(\mathcal P,\mathcal W)
=
\omega[\sigma(s_*),0].
\]

Now define the homotopy
\[
\mathcal H:
[0,1]\times\mathcal W
\longrightarrow
\mathbb R\times\mathcal X
\]
by
\[
\mathcal H(\lambda,s,u)
=
\bigl(
(1-\lambda)\vartheta(s,u)-\lambda,
\Phi(s,u)
\bigr).
\]
Since
\[
\Phi(s,u)\neq0
\qquad
\text{for all }(s,u)\in\partial\mathcal W,
\]
the homotopy \(\mathcal H\) has no zeros on \(\partial\mathcal W\) and is therefore admissible. Homotopy invariance gives
\[
\eqdeg{G}(\mathcal P,\mathcal W)
=
\eqdeg{G}\bigl(\mathcal H(1,\cdot),\mathcal W\bigr).
\]
At \(\lambda=1\),
\[
\mathcal H(1,s,u)
=
\bigl(-1,\Phi(s,u)\bigr),
\]
whose first component is never zero. Hence
\[
\eqdeg{G}\bigl(\mathcal H(1,\cdot),\mathcal W\bigr)=0.
\]
It follows that
\[
\omega[\sigma(s_*),0]=
\eqdeg{G}(\mathcal P,\mathcal W)=0,
\]
contradicting the assumption
\[
\omega[\sigma(s_*),0]\neq0.
\]
Therefore, at least one of the three alternatives must occur.
\end{proof}

\begin{remark}
Theorem~\ref{thm:global-continuation} shows that a branch detected by the nonzero invariant \(
\omega[\sigma(s_*),0]\)
cannot terminate in a bounded neighborhood of \((s_*,0)\). Along the transversal family, the branch must become unbounded, meet another critical point of the trivial branch, or approach an endpoint of \(J\).
\end{remark}

The unbounded alternative can be refined when periodic solutions satisfy an a priori bound on compact subintervals of \(J\).

\begin{condition}\label{cond:new-bounded-on-intervals}
For every compact interval \(
K\subset J,\) 
there exists a constant \(M_K>0\) such that every \(2\pi\)-periodic solution \(u\in\mathcal X\) of
\[
\Phi(s,u)=0,
\qquad
s\in K,
\]
satisfies
\[
\|u\|_{\mathcal X}\leq M_K.
\]
\end{condition}

\begin{corollary}\label{cor:new-parameter-continuation}
Assume that the hypotheses of Theorem~\ref{thm:global-continuation} and Condition~\ref{cond:new-bounded-on-intervals} hold. Suppose further that \(
\mathcal C_\sigma(s_*,0)\)
does not meet another critical point of the trivial branch. Then either \(\operatorname{proj}_{J}\mathcal C_\sigma(s_*,0)\)
is not relatively compact in \(J\), or the closure of
\(\mathcal C_\sigma(s_*,0)\) meets
\[
\partial J\times\mathcal X.
\]
In particular, if \(
J=\mathbb R,\)
then \(\operatorname{proj}_{\mathbb R}\mathcal C_\sigma(s_*,0)
\) is unbounded.
\end{corollary}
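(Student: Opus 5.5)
We argue by contradiction, using Theorem~\ref{thm:global-continuation} together with the a priori bound of Condition~\ref{cond:new-bounded-on-intervals}. By hypothesis, alternative (2) of Theorem~\ref{thm:global-continuation} is excluded, so either alternative (1) or alternative (3) holds. If (3) holds, we are done. It therefore suffices to show that alternative (1), together with relative compactness of \(\operatorname{proj}_J\mathcal C_\sigma(s_*,0)\) in \(J\), leads to a contradiction. Once this is shown, the conclusion follows.

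Assume that \(K:=\overline{\operatorname{proj}_J\mathcal C_\sigma(s_*,0)}\) is a compact subset of \(J\). Replacing \(K\) by its convex hull, we may assume that \(K\) is a compact interval contained in \(J\). Every point \((s,u)\in\mathcal C_\sigma(s_*,0)\) satisfies \(\Phi(s,u)=0\), because \(\mathcal S_\sigma\) is the closure of a zero set of the continuous map \(\Phi\), and hence is contained in the closed set \(\Phi^{-1}(0)\). It also satisfies \(s\in K\). Condition~\ref{cond:new-bounded-on-intervals} then gives \(\|u\|_{\mathcal X}\le M_K\) for all such points. Since \(K\) is bounded, \(\mathcal C_\sigma(s_*,0)\) is bounded in the product norm \(|s|+\|u\|_{\mathcal X}\), which rules out alternative (1).

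The obstacle is that alternative (3) might still occur while \(K\subset J\) is compact. Because \(K\) is a compact subset of the open interval \(J\), we have \(\operatorname{dist}(K,\partial J)>0\). The closure of \(\mathcal C_\sigma(s_*,0)\) is taken in \(J\times\mathcal X\) (or, if it is taken in \(\mathbb R\times\mathcal X\), its \(s\)-coordinates still lie in \(\overline{\operatorname{proj}_J\mathcal C_\sigma}=K\)). Hence the closure cannot meet \(\partial J\times\mathcal X\). Thus all three alternatives fail, contradicting Theorem~\ref{thm:global-continuation}. We conclude that either the projection is not relatively compact in \(J\) or the closure meets \(\partial J\times\mathcal X\).

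For \(J=\mathbb R\), we have \(\partial J=\emptyset\), so alternative (3) cannot occur. In \(\mathbb R\), a set is relatively compact exactly when it is bounded. Therefore \(\operatorname{proj}_{\mathbb R}\mathcal C_\sigma(s_*,0)\) is unbounded. The only delicate point in the argument is to make precise in which ambient space the closure in alternative (3) is taken. I would handle this through the distance estimate above, which works in either interpretation.
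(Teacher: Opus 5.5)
Your proof is correct and follows essentially the same route as the paper: assume the projection lies in a compact interval \(K\subset J\), use Condition~\ref{cond:new-bounded-on-intervals} to bound the component and rule out alternative (1), and invoke Theorem~\ref{thm:global-continuation} with alternative (2) excluded. Your extra step, that a relatively compact projection also keeps the closure away from \(\partial J\times\mathcal X\), is correct. It sharpens the paper's looser final sentence by showing the projection can never be relatively compact in \(J\).
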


\begin{proof}
Suppose that \(
\operatorname{proj}_{J}\mathcal C_\sigma(s_*,0)\)
is contained in a compact interval \(K\subset J\). By
Condition~\ref{cond:new-bounded-on-intervals}, there exists \(M_K>0\) such that
\[
\|u\|_{\mathcal X}\leq M_K
\]
for every
\[
(s,u)\in\mathcal C_\sigma(s_*,0).
\]
Hence \(\mathcal C_\sigma(s_*,0)\) is bounded in \(J\times\mathcal X\).

Since the component does not meet another critical point of the trivial branch, Theorem~\ref{thm:global-continuation} implies that its closure must meet
\[
\partial J\times\mathcal X.
\]
Therefore, if the boundary alternative does not occur, the projection of the component cannot be relatively compact in \(J\).

If \(J=\mathbb R\), then \(\partial J=\varnothing\), so the boundary alternative is absent. Consequently, \(
\operatorname{proj}_{\mathbb R}\mathcal C_\sigma(s_*,0)
\) must be unbounded.
\end{proof}
\section{Crossing Number for Purely Imaginary Characteristic Roots}

In this section, we compute the crossing number associated with the purely imaginary characteristic roots
\[
\nu=\pm i m_*.
\]
We take \(b\) as the bifurcation parameter and keep
\[
\eta,\qquad \alpha,\qquad c,\qquad \tau_{0,*},\qquad \tau_c
\]
fixed. Let
\[
\zeta(b)=\varrho(b)+i\varphi(b)
\]
be a characteristic root such that
\[
\varrho(b_*)=0,
\qquad
\varphi(b_*)=m_*.
\]

Substituting
\[
\zeta(b)=\varrho(b)+i\varphi(b)
\]
into the characteristic equation, we obtain
\[
\Delta_\ell(\varrho+i\varphi,b,\tau_{0,*})
=
R_\ell(\varrho,\varphi,b)
+iI_\ell(\varrho,\varphi,b),
\]
where
\[
\begin{aligned}
R_\ell(\varrho,\varphi,b)
={}&
\varrho^2-\varphi^2+\eta\varrho+\alpha+2c \\
&+b e^{-\varrho\tau_{0,*}}\cos(\varphi\tau_{0,*})
-2c\cos(\vartheta_\ell)e^{-\varrho\tau_c}
\cos(\varphi\tau_c),
\end{aligned}
\]
and
\[
\begin{aligned}
I_\ell(\varrho,\varphi,b)
={}&
2\varrho\varphi+\eta\varphi
-b e^{-\varrho\tau_{0,*}}\sin(\varphi\tau_{0,*}) \\
&+2c\cos(\vartheta_\ell)e^{-\varrho\tau_c}
\sin(\varphi\tau_c).
\end{aligned}
\]
Therefore, the characteristic equation is equivalent to
\[
R_\ell(\varrho,\varphi,b)=0,
\qquad
I_\ell(\varrho,\varphi,b)=0.
\]

Differentiating these equations with respect to \(b\) and evaluating them at
\[
(\varrho,\varphi,b)=(0,m_*,b_*),
\]
gives
\[
\left\{
\begin{aligned}
A_{\ell,m_*}\varrho'+B_{\ell,m_*}\varphi'
&=-\cos(m_*\tau_{0,*}),\\
-B_{\ell,m_*}\varrho'+A_{\ell,m_*}\varphi'
&=\sin(m_*\tau_{0,*}),
\end{aligned}
\right.
\]
where
\[
A_{\ell,m_*}
=
\eta-b_*\tau_{0,*}\cos(m_*\tau_{0,*})
+2c\tau_c\cos(\vartheta_\ell)\cos(m_*\tau_c),
\]
and
\[
B_{\ell,m_*}
=
-2m_*
-b_*\tau_{0,*}\sin(m_*\tau_{0,*})
+2c\tau_c\cos(\vartheta_\ell)\sin(m_*\tau_c).
\]
Here,
\[
\varrho'
=
\left.\frac{d\varrho}{db}\right|_{b=b_*},
\qquad
\varphi'
=
\left.\frac{d\varphi}{db}\right|_{b=b_*}.
\]

If
\[
A_{\ell,m_*}^2+B_{\ell,m_*}^2\neq0,
\]
then
\[
\partial_\nu\Delta_\ell(im_*,b_*,\tau_{0,*})\neq0,
\]
so the critical characteristic root is simple. Solving the preceding system yields
\[
\varrho'
=
-\frac{
A_{\ell,m_*}\cos(m_*\tau_{0,*})
+B_{\ell,m_*}\sin(m_*\tau_{0,*})
}{
A_{\ell,m_*}^2+B_{\ell,m_*}^2
},
\]
and
\[
\varphi'
=
\frac{
A_{\ell,m_*}\sin(m_*\tau_{0,*})
-B_{\ell,m_*}\cos(m_*\tau_{0,*})
}{
A_{\ell,m_*}^2+B_{\ell,m_*}^2
}.
\]
Consequently,
\[
\left.
\frac{d}{db}\operatorname{Re}\zeta(b)
\right|_{b=b_*}
=
-\frac{
A_{\ell,m_*}\cos(m_*\tau_{0,*})
+B_{\ell,m_*}\sin(m_*\tau_{0,*})
}{
A_{\ell,m_*}^2+B_{\ell,m_*}^2
}.
\]

Since
\[
A_{\ell,m_*}^2+B_{\ell,m_*}^2>0,
\]
the sign of the crossing speed is determined by
\begin{equation}\label{eq:sign-equation}
\operatorname{sign}
\left(
\left.
\frac{d}{db}\operatorname{Re}\zeta(b)
\right|_{b=b_*}
\right)
=
-\operatorname{sign}
\left(
A_{\ell,m_*}\cos(m_*\tau_{0,*})
+B_{\ell,m_*}\sin(m_*\tau_{0,*})
\right).
\end{equation}

Therefore, assume that the transversality condition
\[
A_{\ell,m_*}\cos(m_*\tau_{0,*})
+B_{\ell,m_*}\sin(m_*\tau_{0,*})
\neq0
\]
holds. Then the conjugate pair crosses the imaginary axis with nonzero speed as \(b\) passes through \(b_*\). Counting a conjugate pair as one crossing, we define the crossing number by
\begin{equation}\label{eq:new-crossing-number}
\mathfrak t_{\ell,m_*}(p_*)
=
\begin{cases}
-d_{\ell,m_*},&
A_{\ell,m_*}\cos(m_*\tau_{0,*})
+B_{\ell,m_*}\sin(m_*\tau_{0,*})<0,\\[1mm]
d_{\ell,m_*},&
A_{\ell,m_*}\cos(m_*\tau_{0,*})
+B_{\ell,m_*}\sin(m_*\tau_{0,*})>0,
\end{cases}
\end{equation}
where \(d_{\ell,m_*}\) denotes the number of copies of the irreducible
representation \(\mathcal V_{\ell,m_*}\) contained in the critical
eigenspace corresponding to the pair \(\nu=\pm im_*\).

 \section{Examples}
\subsection{Example 1: The Case \(D_7\)}

We illustrate the preceding results for a ring supply-chain network consisting of seven identical warehouses. Let \(n=7\) and
\[
G=D_7\times\mathbb Z_2\times S^1.
\]
The spatial wave numbers are
\[
\vartheta_\ell=\frac{2\pi\ell}{7},\qquad \ell=0,1,\ldots,6.
\]
Since \(n=7\) is odd, the nontrivial spatial Fourier modes occur in the conjugate pairs
\[
\ell=1,6,\qquad \ell=2,5,\qquad \ell=3,4.
\]

We fix
\[
\eta=2,\qquad \alpha=1,\qquad c=1,\qquad \tau_c=\frac{3\pi}{4},
\]
and consider the spatio-temporal block
\[
(\ell_*,m_*)=(1,1).
\]
Thus,
\[
\vartheta_1=\frac{2\pi}{7},
\qquad
m_*=1.
\]

The real and imaginary parts of the characteristic equation at \(\nu=i\) give
\[
b_*\cos(\tau_{0,*})
=1-\alpha-2c+
2c\cos\left(\frac{2\pi}{7}\right)
\cos\left(\frac{3\pi}{4}\right)
\]
and
\[
b_*\sin(\tau_{0,*})=\eta+
2c\cos\left(\frac{2\pi}{7}\right)
\sin\left(\frac{3\pi}{4}\right).
\]
Therefore, using the notation in
\eqref{eq:P-Q-equation}--\eqref{eq:solve-for-b}, we obtain

\[
b_*\approx4.07541\quad \tau_{0,*}\approx2.35619.
\]

Thus,
\[
p_*=(b_*,\tau_{0,*})
\approx(4.07541,2.35619)
\]
is a critical parameter point associated with the block \(\mathcal V_{1,1}\).

We next determine the negative part of the zero temporal mode. The eigenvalue of the linearized fixed-point operator on the block \(\mathcal V_{\ell,m}\) is
\[
\mu_{\ell,m}(b,\tau_0)
=
\frac{
m^2-\alpha-2c
+2c\cos(\vartheta_\ell)e^{-im\tau_c}
-b e^{-im\tau_0}
-i\eta m
}{m^2+1}.
\]
For \(m=0\), the exponential delay factors are equal to \(1\), and therefore
\[
\mu_{\ell,0}(b_*,\tau_{0,*})
=-\alpha-b_*+
2c\left(
\cos\left(\frac{2\pi\ell}{7}\right)-1
\right).
\]

For \(r=1,2,3\), define
\[
a_r
=
\left(
\cos(0\vartheta_r),
\cos(\vartheta_r),
\ldots,
\cos(6\vartheta_r)
\right)^T
\]
and
\[
b_r
=
\left(
\sin(0\vartheta_r),
\sin(\vartheta_r),
\ldots,
\sin(6\vartheta_r)
\right)^T.
\]
The real spatial eigenspaces are
\[
W_0
=
\operatorname{span}
\left\{
(1,1,1,1,1,1,1)^T
\right\},
\]
and
\[
W_r=\operatorname{span}\{a_r,b_r\},
\qquad r=1,2,3.
\]
The zero-frequency eigenvalues are summarized in
Table~\ref{tab:D7-zero-frequency}.
\begin{table}[H]
\caption{Eigenvalues and eigenspaces of the zero temporal mode for Example~1.}
\label{tab:D7-zero-frequency}
\centering
\[
\begin{array}{|c|c|c|c|}
\hline
\text{spatial mode}
&
\ell
&
\mu_{\ell,0}(b_*,\tau_{0,*})
&
\text{eigenspace}
\\
\hline
\text{homogeneous mode}
&
0
&
-5.07541
&
W_0=\operatorname{span}\{(1,1,1,1,1,1,1)^T\}
\\
\hline
\text{first spatial pair}
&
1,6
&
-5.82843
&
W_1=\operatorname{span}\{a_1,b_1\}
\\
\hline
\text{second spatial pair}
&
2,5
&
-7.52045
&
W_2=\operatorname{span}\{a_2,b_2\}
\\
\hline
\text{third spatial pair}
&
3,4
&
-8.87734
&
W_3=\operatorname{span}\{a_3,b_3\}
\\
\hline
\end{array}
\]
\end{table}

Consequently, every component of the zero temporal mode belongs to the negative spectral space.
To identify the relevant irreducible components, we use the character table of \(D_7\times\mathbb Z_2\) in Table~\ref{character-table-d7}:

  \begin{table}[H]
		\centering
		\caption{Character Table of \(D_7\times \bz_2\)}
          \label{character-table-d7}
		\begin{tabular}{@{}lrrrrrrrrrr@{}}
			\toprule
			& \((1)\) & \((-1)\) &  \((\kappa)\) &\((-\kappa)\) & \((r)\) &\((-r)\) & \((r^2)\) & \((-r^2)\) & \((r^3)\) & \((-r^3)\) \\
			\midrule
			\(\mathcal{V}_1\) & \(1\) & \(1\) & \(1\) & \(1\) & \(1\) & \(1\)& \(1\) & \(1\) & \(1\) & \(1\) \\
        \(\mathcal{V}_2\) & \(1\) & \(-1\) & \(-1\) & \(1\) & \(1\) & \(-1\)& \(1\) & \(-1\) & \(1\) & \(-1\)   \\
        			\(\mathcal{V}_3\) & \(1\) & \(-1\) & \(1\) & \(-1\) & \(1\) & \(-1\)& \(1\) & \(-1\) & \(1\) & \(-1\)  \\
        \(\mathcal{V}_4\) & \(1\) & \(1\) & \(-1\) & \(-1\) & \(1\) & \(1\)& \(1\) & \(1\) & \(1\) & \(1\)   \\
           \(\mathcal{V}_5\) & \(2\) & \(-2\) & \(0\) & \(0\) & \(A\) & \(-A\) & \(C\)& \(-C\) & \(B\) & \(-B\)  \\
               \(\mathcal{V}_6\) & \(2\) & \(-2\) & \(0\) & \(0\) & \(B\) & \(-B\) & \(A\)& \(-A\) & \(C\) & \(-C\)  \\
                  \(\mathcal{V}_7\) & \(2\) & \(-2\) & \(0\) & \(0\) & \(C\) & \(-C\) & \(B\)& \(-B\) & \(A\) & \(-A\)  \\
                  \(\mathcal{V}_8\) & \(2\) & \(2\) & \(0\) & \(0\) & \(A\) & \(A\) & \(C\)& \(C\) & \(B\) & \(B\)  \\
               \(\mathcal{V}_9\) & \(2\) & \(2\) & \(0\) & \(0\) & \(B\) & \(B\) & \(A\)& \(A\) & \(C\) & \(C\)  \\
                  \(\mathcal{V}_{10}\) & \(2\) & \(2\) & \(0\) & \(0\) & \(C\) & \(C\) & \(B\)& \(B\) & \(A\) & \(A\)  \\
			\bottomrule
		\end{tabular}
	\end{table}
where \(A=2\cos(\frac{6\pi}{7}), B=2\cos(\frac{4\pi}{7}), C=2\cos(\frac{2\pi}{7})\), we 
have the \(D_7\times\mathbb Z_2\)-isotypical decomposition
\[
\mathbb R^7=\mathcal V_3\oplus\mathcal V_7\oplus\mathcal V_6\oplus
\mathcal V_5,
\]
the components \(\mathcal V_3, \mathcal V_7, \mathcal V_6, \mathcal V_5\) correspond respectively to the real spatial modes \(\ell=0, \ell=1, \ell=2, \ell=3. \)

We now compute the crossing direction of the critical block in the \(b\)-direction while keeping \(
\tau_0=\tau_{0,*} \)
fixed. By~\eqref{eq:sign-equation} and~\eqref{eq:new-crossing-number}, we have 
\[
\left.
\frac{d}{db}\operatorname{Re}\zeta(b)
\right|_{b=b_*}
\approx0.10534>0.
\]
Thus, the conjugate pair \(\nu=\pm i\)
crosses the imaginary axis from left to right as \(b\) increases through \(b_*\). Since the critical eigenspace contains one copy of
\(\mathcal V_{1,1}\), we have \(d_{1,1}=1\), and hence
\[
\mathfrak t_{1,1}(p_*)=-1.
\]
		
By computation in GAP, we have 
		\begin{align*}
		&\eqdeg{G}_{\mathcal V_3}\cdot\eqdeg{G}_{\mathcal V_7}\cdot\eqdeg{G}_{\mathcal V_6}\cdot\eqdeg{G}_{\mathcal V_5}=(D_7^p)-(D_7)-(D_1^z)+(D_1)\\
        &\eqdeg{G}_{\mathcal V_{1,1}}=(\amal{\bz_7^p}{\bz_{14}}{}{\bz_1}{\bz_1})+(\amal{D_1^p}{\bz_2}{}{D_1}{\bz_1})+(\amal{D_1^p}{\bz_2}{}{D_1^z}{\bz_1})-(\amal{\bz_1^p}{\bz_2}{}{\bz_1}{\bz_1})
		\end{align*}
      The group notation used in this paper can be found in \cite{AED,diss}.
We compute the local bifurcation invariant at \((p_*, 0)\):
\[
	\begin{aligned}
 \omega[p_*, 0]&= \eqdeg{G}_{\mathcal V_3}\cdot\eqdeg{G}_{\mathcal V_7}\cdot\eqdeg{G}_{\mathcal V_6}\cdot\eqdeg{G}_{\mathcal V_5}\cdot(-\eqdeg{G}_{\mathcal V_{1,1}})\\
 &=(\amal{\bz_7^p}{\bz_{14}}{}{\bz_1}{\bz_1})-(\amal{\bz_7}{\bz_{7}}{}{\bz_1}{\bz_1})+(\amal{D_1^p}{\bz_{2}}{}{D_1}{\bz_1})+(\amal{D_1^p}{\bz_{2}}{}{D_1^z}{\bz_1})\\
 &\quad -3(\amal{\bz_1^p}{\bz_{2}}{}{\bz_1}{\bz_1})+(\bz_1\times\bz_1).
	\end{aligned}
\]

From the computation of the local bifurcation invariant, we conclude that global bifurcation occurs. More precisely, there exists at least one global branch of nonconstant \(2\pi\)-periodic solutions bifurcating from the trivial branch, and the bifurcating solutions may have one of the following orbit types:
\[
(\amal{\bz_7^p}{\bz_{14}}{}{\bz_1}{\bz_1}),\qquad
(\amal{D_1^p}{\bz_{2}}{}{D_1}{\bz_1}),\qquad
(\amal{D_1^p}{\bz_{2}}{}{D_1^z}{\bz_1}).
\]
We use a Fourier collocation method on \([0,2\pi]\) with \(N=48\) equally
spaced nodes, so that \(h=2\pi/N=\pi/24\). The delayed terms are evaluated
in Fourier space, and the nonlinear periodic branch is continued numerically
using Broyden's method. A representative periodic solution is shown in
Figure~\ref{fig:D7-periodic-solution}, while the corresponding bifurcation
diagram is displayed in Figure~\ref{fig:D7-bifurcation}.

\begin{figure}[H]
    \centering
        \caption{Numerically computed periodic solution for Example 1.}
    \includegraphics[width=0.6\textwidth]{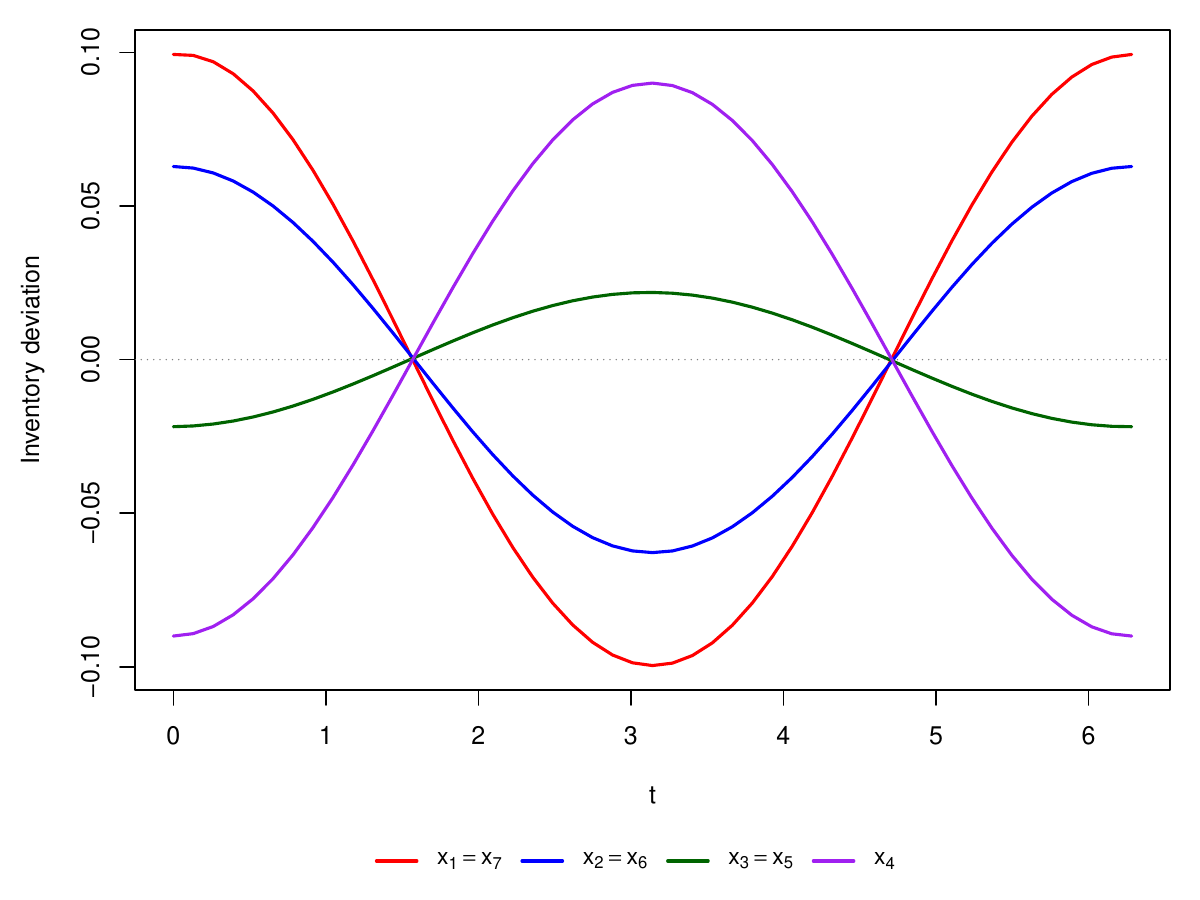}
    \label{fig:D7-periodic-solution}
\end{figure}

\begin{figure}[H]
    \centering
        \caption{Numerical bifurcation diagram for Example~1.}
    \includegraphics[width=0.6\textwidth]{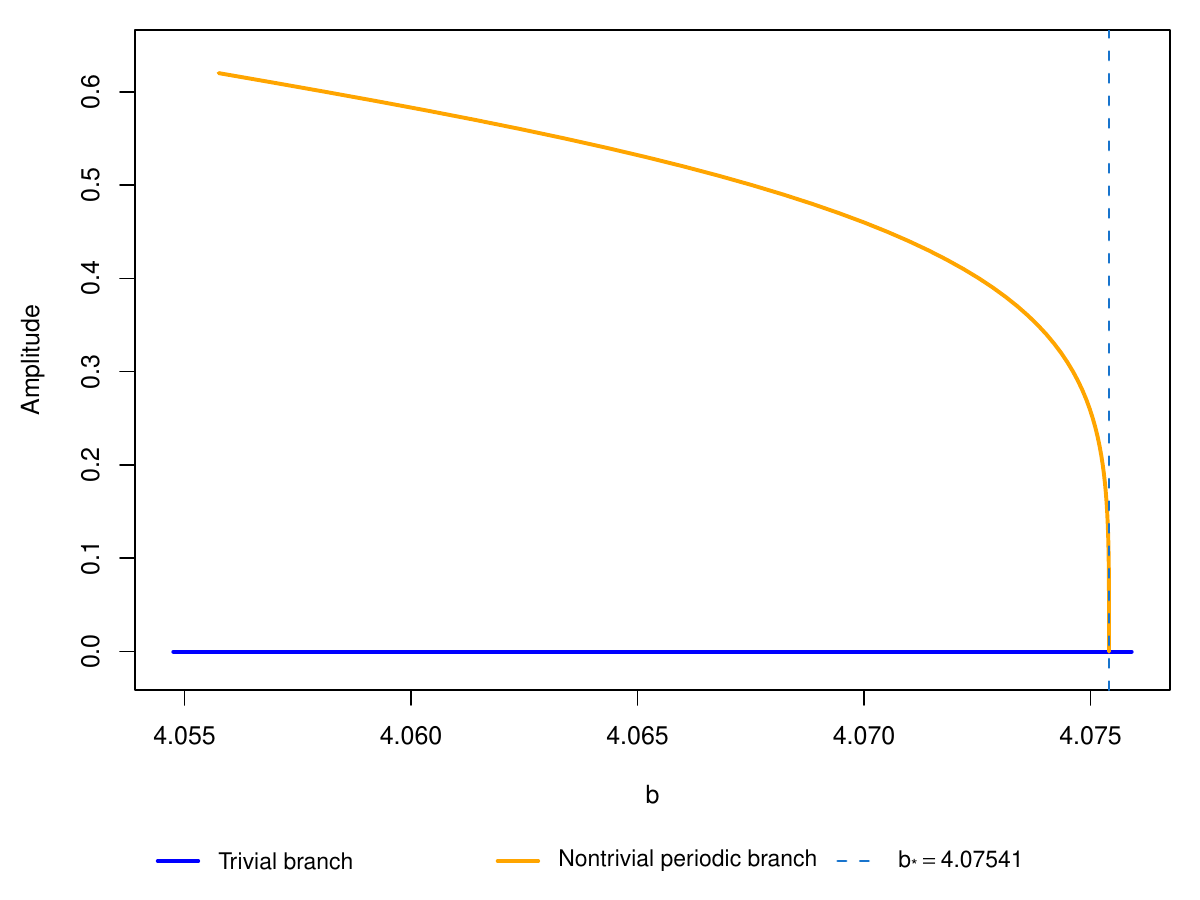}
    \label{fig:D7-bifurcation}
\end{figure}

\subsection{Example 2: The Case \(D_8\)}

We now give a second example for a supply-chain ring with eight warehouses.
Let \(n=8\) and
\[
G=D_8\times\mathbb Z_2\times S^1.
\]
The spatial wave numbers are
\[
\vartheta_\ell=\frac{2\pi\ell}{8},
\qquad
\ell=0,1,\ldots,7.
\]
For \(D_8\), the nontrivial spatial modes occur in the pairs
\[
\ell=1,7,
\qquad
\ell=2,6,
\qquad
\ell=3,5,
\]
together with the special mode
\[
\ell=4.
\]

We fix
\[
\eta=2,
\qquad
\alpha=1,
\qquad
c=0.6,
\qquad
\tau_c=\pi,
\]
and consider the spatio-temporal block
\[
(\ell_*,m_*)=(2,1).
\]
By a similar computation, we obtain the critical parameter point
\[
p_*=(b_*,\tau_{0,*})=
\left(
\frac{2\sqrt{34}}{5},
\pi-\arctan\frac{5}{3}
\right),
\]
associated with the block \(\mathcal V_{2,1}\).

 For \(m=0\), the eigenvalue of the linearized fixed-point operator is
\[
\mu_{\ell,0}(b_*,\tau_{0,*})
=-\alpha-b_*+2c\left(\cos\frac{2\pi\ell}{8}-1
\right).
\]
Define the spatial eigenspaces similarly:
\[
a_r=\left(\cos(0\vartheta_r),\cos(1\vartheta_r),\ldots,\cos(7\vartheta_r)\right)^T,
\]
and
\[
b_r=\left(\sin(0\vartheta_r),\sin(1\vartheta_r),\ldots,\sin(7\vartheta_r)\right)^T,
\]
where
\[
\vartheta_r=\frac{2\pi r}{8},\qquad r=1,2,3.
\]
Then
\[
W_0=\operatorname{span}\{(1,1,1,1,1,1,1,1)^T\},
\]
and, for \(r=1,2,3\),
\[
W_r=\operatorname{span}\{a_r,b_r\}.
\]
The special mode \(\ell=4\) has the one-dimensional eigenspace
\[
W_4=\operatorname{span}\{(1,-1,1,-1,1,-1,1,-1)^T\}.
\]

The \(m=0\) eigenvalues and corresponding spatial eigenspaces are summarized in Table~\ref{tab:D8-zero-frequency}.
\begin{table}[H]
\caption{Eigenvalues and eigenspaces of the zero temporal mode for Example~2.}
\label{tab:D8-zero-frequency}
\centering
\[
\begin{array}{|c|c|c|c|}
\hline
\text{spatial mode} & \ell & \mu_{\ell,0}(b_*,\tau_{0,*}) & \text{eigenspace} \\
\hline
\text{homogeneous mode} & 0 & -3.33238 & W_0=\operatorname{span}\{(1,1,1,1,1,1,1,1)^T\} \\
\hline
\text{first spatial pair} & 1,7 & -3.68385 & W_1=\operatorname{span}\{a_1,b_1\} \\
\hline
\text{second spatial pair} & 2,6 & -4.53238 & W_2=\operatorname{span}\{a_2,b_2\} \\
\hline
\text{third spatial pair} & 3,5 & -5.38091 & W_3=\operatorname{span}\{a_3,b_3\} \\
\hline
\text{alternating mode} & 4 & -5.73238 & W_4=\operatorname{span}\{(1,-1,1,-1,1,-1,1,-1)^T\}\\
\hline
\end{array}
\]
\end{table}
We next compute the crossing number
\[
\operatorname{sign}
\left(
\left.
\frac{d}{db}\operatorname{Re}\zeta(b)
\right|_{b=b_*}
\right)
=
-\operatorname{sign}
\left(
A_{2,1}\cos(\tau_{0,*})
+
B_{2,1}\sin(\tau_{0,*})
\right)
=1.
\]
Thus the conjugate pair
\[
\nu=\pm i
\]
crosses from the left half-plane to the right half-plane as
\(b\) increases through \(b_*\).

Since the critical eigenspace contains one copy of
\(\mathcal V_{2,1}\), we have \(d_{2,1}=1\), and hence
\[
\mathfrak t_{2,1}(p_*)=-1.
\]

To continue, the \(D_8\times\bz_2\) character table is in Table~\ref{table:D_8 Z2 isotypical}:
\footnotesize
  \begin{table}[H]
		\centering
		\caption{Character Table of \(D_8\times \bz_2\)}
          \label{table:D_8 Z2 isotypical}
		\begin{tabular}{@{}lrrrrrrrrrrrrrrrr@{}}
			\toprule
			& \((1)\) & \((-1)\) &  \((\kappa)\) &\((-\kappa)\) & \((\kappa r)\) & \((-\kappa r)\) & \((r)\) &\((-r)\) & \((r^2)\) & \((-r^2)\) & \((r^3)\) & \((-r^3)\) & \((r^4)\) & \((-r^4)\)\\
			\midrule
			\(\mathcal{V}_1\) & \(1\) & \(1\) & \(1\) & \(1\) & \(1\) & \(1\)& \(1\) & \(1\) & \(1\) & \(1\) & \(1\) & \(1\) & \(1\) & \(1\)  \\
        \(\mathcal{V}_2\) & \(1\) & \(-1\) & \(1\) & \(-1\) & \(1\) & \(-1\)& \(1\) & \(-1\) & \(1\) & \(-1\)  & \(1\) & \(-1\) & \(1\) & \(-1\)  \\
        			\(\mathcal{V}_3\) & \(1\) & \(1\) & \(-1\) & \(-1\) & \(-1\) & \(-1\)& \(1\) & \(1\) & \(1\) & \(1\) & \(1\) & \(1\) & \(1\) & \(1\) \\
      \(\mathcal{V}_4\) & \(1\) & \(-1\) & \(-1\) & \(1\) & \(-1\) & \(1\)& \(1\) & \(-1\) & \(1\) & \(-1\) & \(1\) & \(-1\) & \(1\) & \(-1\) \\
       \(\mathcal{V}_5\) & \(1\) & \(1\) & \(1\) & \(1\) & \(-1\) & \(-1\)& \(-1\) & \(-1\) & \(1\) & \(1\) & \(-1\) & \(-1\) & \(1\) & \(1\) \\
          \(\mathcal{V}_6\) & \(1\) & \(-1\) & \(1\) & \(-1\) & \(-1\) & \(1\)& \(-1\) & \(1\) & \(1\) & \(-1\) & \(-1\) & \(1\) & \(1\) & \(-1\) \\
            \(\mathcal{V}_7\) & \(1\) & \(1\) & \(-1\) & \(-1\) & \(1\) & \(1\)& \(-1\) & \(-1\) & \(1\) & \(1\) & \(-1\) & \(-1\) & \(1\) & \(1\) \\
                 \(\mathcal{V}_8\) & \(1\) & \(-1\) & \(-1\) & \(1\) & \(1\) & \(-1\)& \(-1\) & \(1\) & \(1\) & \(-1\) & \(-1\) & \(1\) & \(1\) & \(-1\) \\
           \(\mathcal{V}_9\) & \(2\) & \(2\) & \(0\) & \(0\) & \(0\) & \(0\) & \(0\) & \(0\) & \(-2\) & \(-2\) & \(0\) & \(0\) & \(2\) & \(2\)  \\
    \(\mathcal{V}_{10}\) & \(2\) & \(-2\) & \(0\) & \(0\) & \(0\) & \(0\) & \(0\) & \(0\) & \(-2\) & \(2\) & \(0\) & \(0\) & \(2\) & \(-2\)  \\
    \(\mathcal{V}_{11}\) & \(2\) & \(2\) & \(0\) & \(0\) & \(0\) & \(0\) & \(\sqrt{2}\) & \(\sqrt{2}\) & \(0\) & \(0\) & \(-\sqrt{2}\) & \(-\sqrt{2}\) & \(-2\) & \(-2\)  \\
      \(\mathcal{V}_{12}\) & \(2\) & \(-2\) & \(0\) & \(0\) & \(0\) & \(0\) & \(\sqrt{2}\) & \(-\sqrt{2}\) & \(0\) & \(0\) & \(-\sqrt{2}\) & \(\sqrt{2}\) & \(-2\) & \(2\)  \\
    \(\mathcal{V}_{13}\) & \(2\) & \(2\) & \(0\) & \(0\) & \(0\) & \(0\) & \(-\sqrt{2}\) & \(-\sqrt{2}\) & \(0\) & \(0\) & \(\sqrt{2}\) & \(\sqrt{2}\) & \(-2\) & \(-2\)  \\
        \(\mathcal{V}_{14}\) & \(2\) & \(-2\) & \(0\) & \(0\) & \(0\) & \(0\) & \(-\sqrt{2}\) & \(\sqrt{2}\) & \(0\) & \(0\) & \(\sqrt{2}\) & \(-\sqrt{2}\) & \(-2\) & \(2\)  \\
			\bottomrule
		\end{tabular}
          \normalsize
	\end{table}
    \normalsize
    Then we have \(\mathbb R^8=\mathcal V_2\oplus\mathcal V_{12}\oplus \mathcal V_{10}\oplus \mathcal V_{14}\oplus \mathcal V_{6} \), where \(\mathcal V_2, \mathcal V_{12}, \mathcal V_{10}, \mathcal V_{14},  \mathcal V_{6}\) are associated with eigenvalues when \(\ell=0,1,2,3,4\) respectively, the components \(\mathcal V_2, \mathcal V_{12}, \mathcal V_{10}, \mathcal V_{14}, \mathcal V_6\) correspond respectively to the real spatial modes \(\ell=0, \ell=1, \ell=2, \ell=3, \ell=4.\)

Using the following basic-degree computations,
		\begin{align*}
		\eqdeg{G}_{\mathcal V_{2}}\cdot\eqdeg{G}_{\mathcal V_{12}}\cdot\eqdeg{G}_{\mathcal V_{10}}\cdot\eqdeg{G}_{\mathcal V_{14}}\cdot\eqdeg{G}_{\mathcal V_6}&=(D_8^p)-(D_8^d)-(D_8)-(D_4^d)-(\tilde D_4^d)\\
        &+(D_4)+(\tilde D_2^z)+(\tilde D_2)+(\bz_4^d)-(\bz_2)
		\end{align*}
        \begin{align*}
        \eqdeg{G}_{\mathcal V_{2,1}}=(\amal{D_4^p}{\bz_{2}}{}{D_4^d}{\bz_1})+(\amal{\bz_8^p}{\bz_4}{}{\bz_4^d}{\bz_1})+(\amal{\tilde D_4^p}{\bz_2}{}{\tilde D_4^d}{\bz_1})-(\amal{\bz_4^p}{\bz_2}{}{\bz_4^d}{\bz_1})
		\end{align*}
We compute the following local bifurcation invariant at \((p_*, 0)\):
\[
	\begin{aligned}
 \omega[p_*,0]&= 	\eqdeg{G}_{\mathcal V_{2}}\cdot\eqdeg{G}_{\mathcal V_{12}}\cdot\eqdeg{G}_{\mathcal V_{10}}\cdot\eqdeg{G}_{\mathcal V_{14}}\cdot\eqdeg{G}_{\mathcal V_6}\cdot(-\eqdeg{G}_{\mathcal V_{2,1}})\\
 &=(\amal{D_4^p}{\bz_{2}}{}{D_4^d}{\bz_1})-(\amal{\bz_8^p}{\bz_{4}}{}{\bz_4^d}{\bz_1})-(\amal{\tilde D_4^p}{\bz_{2}}{}{\tilde D_4^d}{\bz_1})+3(\amal{\bz_4^p}{\bz_{2}}{}{\bz_4^d}{\bz_1})\\
 &\quad +(\tilde D_4^d\times\bz_1)+(\amal{\tilde D_4^d}{\bz_{2}}{}{\bz_4^d}{\bz_1})+(\amal{D_4^z}{\bz_{2}}{}{D_2^z}{\bz_1})-2(\amal{\tilde D_4^z}{\bz_{2}}{}{\tilde D_2^z}{\bz_1})+(\amal{\tilde D_4}{\bz_{2}}{}{\tilde D_2}{\bz_1})\\
 &\quad+(\amal{\bz_8^d}{\bz_{4}}{}{\bz_2}{\bz_1})-2(\bz_4^d\times\bz_1)-(D_2^z\times\bz_1)-(\amal{D_2^z}{\bz_{2}}{}{\bz_2}{\bz_1})-(\amal{\bz_4}{\bz_{2}}{}{\bz_2}{\bz_1})\\
 &\quad -(\amal{D_1^p}{\bz_{2}}{}{D_1^z}{\bz_1})-(\amal{D_1^p}{\bz_{2}}{}{D_1}{\bz_1})+(D_1\times\bz_1)+(\amal{D_1}{\bz_{2}}{}{\bz_1}{\bz_1})
	\end{aligned}
\]

From the computation of the local bifurcation invariant, we conclude that global bifurcation occurs. More precisely, there exists at least one global branch of nonconstant \(2\pi\)-periodic solutions bifurcating from the trivial branch. Moreover, the bifurcating solutions may have one of the following orbit types:
\[
(\amal{D_4^p}{\bz_{2}}{}{D_4^d}{\bz_1}),\qquad
(\amal{\bz_8^p}{\bz_{4}}{}{\bz_4^d}{\bz_1}),\qquad
(\amal{\tilde D_4^p}{\bz_{2}}{}{\tilde D_4^d}{\bz_1}).
\]
We omit the corresponding numerical figures for this example, since they are qualitatively similar to those presented for Example~1.

The examples predict nonconstant, wave-like inventory patterns in which the warehouses do not oscillate synchronously but instead experience phase-shifted fluctuations around the ring. The computed crossing direction indicates that increasing the delayed inventory-feedback strength may destabilize the homogeneous equilibrium, provided that the remaining characteristic roots remain stable. The nonzero bifurcation invariant then confirms the emergence of nearby periodic inventory oscillations and restricts their possible symmetry types. From a supply-chain perspective, the result suggests that sufficiently strong delayed feedback can produce persistent inventory cycles that propagate through the network rather than occurring uniformly at all warehouses.

\section{Floquet Multipliers and Leading-Order Stability}
In this section, we relate the bifurcation analysis to the Floquet multipliers of the periodic supply-chain oscillations. The equivariant degree detects the existence and symmetry of bifurcating periodic solutions, whereas Floquet theory describes their linear stability under small perturbations. 
\subsection{Leading variational equation along a small-amplitude branch}

Let \(u_*(t)\) be a nontrivial \(2\pi\)-periodic solution bifurcating from an isolated critical point
\[
p_*=(b_*,\tau_{0,*}).
\]
Writing
\[
u(t)=u_*(t)+w(t)
\]
and retaining only terms linear in \(w\), the variational equation is
\begin{equation}\label{eq:new-variational-equation}
\begin{aligned}
w_k''(t)&+\eta w_k'(t)+\alpha w_k(t)+bw_k(t-\tau_{0})\\
&\quad-c\bigl(w_{k+1}(t-\tau_c)-2w_k(t)+w_{k-1}(t-\tau_c)\bigr)
+3\beta u_{*,k}^{\,2}(t)w_k(t)=0.
\end{aligned}
\end{equation}
Because \(u_*(t)\) is \(2\pi\)-periodic, the coefficient \(u_*^2(t)\) is also \(2\pi\)-periodic, and hence \eqref{eq:new-variational-equation} is a linear delay equation with periodic coefficients.

Since the original system is autonomous, \(u_*'(t)\) solves the variational equation. Therefore \(\rho=1\) is always a Floquet multiplier of a nonstationary periodic orbit. This is the neutral multiplier generated by time translation. Linear orbital stability requires this trivial multiplier to be simple and all remaining Floquet multipliers to lie inside the unit disk. If a nontrivial multiplier satisfies \(|\rho|>1\), then the periodic solution is linearly unstable; see
\cite{HaleVerduynLunel1993,YakubovichStarzhinskii1975}.

Assume that bifurcation occurs from a critical pair \((\ell_*,m_*)\) at
\[
p_*=(b_*,\tau_{0,*})
=\bigl(b_{\ell_*,m_*}^{(j)},\tau_{0,\ell_*,m_*}^{(j)}\bigr)
\in\mathcal T_{\ell_*,m_*}.
\]
For a small-amplitude branch, we write
\begin{equation}\label{eq:small-branch-new}
u_*(t)
=\varepsilon q_{\ell_*}\cos(m_*t)+O(\varepsilon^2),
\qquad 0<|\varepsilon|\ll1,
\end{equation}
where \(q_{\ell_*}\) is a normalized vector in the critical spatial eigenspace.

Because the bifurcation parameters are \((b,\tau_0)\), the parameter displacement along a local branch is described by a curve through \(p_*\). We therefore write
\[
b(\varepsilon)
=b_*+b_2\varepsilon^2+O(\varepsilon^3),
\qquad
\tau_0(\varepsilon)
=\tau_{0,*}+\tau_2\varepsilon^2+O(\varepsilon^3),
\]
where \(b_2\) and \(\tau_2\) are determined by the Lyapunov--Schmidt reduced equation for the chosen local branch.

Using \eqref{eq:small-branch-new}, we obtain
\[
u_*^2(t)
=\varepsilon^2q_{\ell_*}^{\,2}\cos^2(m_*t)+O(\varepsilon^3)
=\frac{\varepsilon^2}{2}q_{\ell_*}^{\,2}
\bigl(1+\cos(2m_*t)\bigr)+O(\varepsilon^3),
\]
where \(q_{\ell_*}^{\,2}\) denotes the componentwise square of \(q_{\ell_*}\).
Hence
\[
3\beta u_*^2(t)w(t)
=
\varepsilon^2\frac{3\beta}{2}q_{\ell_*}^{\,2}w(t)
+\varepsilon^2\frac{3\beta}{2}q_{\ell_*}^{\,2}
\cos(2m_*t)w(t)
+O(\varepsilon^3).
\]
Let \(q_\ell\) be a normalized vector in the \(\ell\)-th spatial eigenspace
and consider perturbations of the form
\[
w(t)=q_\ell z(t).
\]
We restrict attention to perturbation directions that are preserved by the leading cubic correction. More precisely, we assume that
\[
q_{\ell_*}^{\,2}q_\ell=C_\ell q_\ell.
\]
 Since \(q_\ell\) is normalized,
\[
C_\ell
=
\left\langle
q_{\ell_*}^{\,2}q_\ell,q_\ell
\right\rangle.
\]
Under these assumptions, the cubic correction preserves the perturbation
direction \(q_\ell\) to leading order. Consequently, the cubic term
restricted to this mode is
\[
\frac{3\beta\varepsilon^2}{2}
C_\ell
\bigl(1+\cos(2m_*t)\bigr)z(t),
\]
Expanding the delay around \(\tau_{0,*}\), we have
\[
z\bigl(t-\tau_0(\varepsilon)\bigr)
=
z(t-\tau_{0,*})
-\tau_2\varepsilon^2 z'(t-\tau_{0,*})
+O(\varepsilon^3).
\]
Therefore,
\[
\begin{aligned}
b(\varepsilon)
z\bigl(t-\tau_0(\varepsilon)\bigr)
&=
\bigl(b_*+b_2\varepsilon^2\bigr)
\left[
z(t-\tau_{0,*})
-\tau_2\varepsilon^2 z'(t-\tau_{0,*})
\right]
+O(\varepsilon^3)\\
&=
b_*z(t-\tau_{0,*})
+\varepsilon^2 b_2 z(t-\tau_{0,*})
-\varepsilon^2 b_*\tau_2 z'(t-\tau_{0,*})
+O(\varepsilon^3).
\end{aligned}
\]
Hence, relative to the linearized operator at
\((b_*,\tau_{0,*})\), the variation of \(b\) contributes
\[
b_2\varepsilon^2 z(t-\tau_{0,*}),
\]
whereas the variation of \(\tau_0\) contributes
\[
-b_*\tau_2\varepsilon^2 z'(t-\tau_{0,*}).
\]
Therefore the projected variational equation takes the form
\[
\begin{aligned}
\Delta_\ell\left(\frac{d}{dt},b_*,\tau_{0,*}\right)z
&+\varepsilon^2
\left[
\frac{3\beta}{2}C_\ell
+\frac{3\beta}{2}C_\ell\cos(2m_*t)
\right]z\\
&+\varepsilon^2 b_2 z(t-\tau_{0,*})
-\varepsilon^2 b_*\tau_2 z'(t-\tau_{0,*})
+O(\varepsilon^3)=0,
\end{aligned}
\]
where
\[
\Delta_\ell\left(\frac{d}{dt},b_*,\tau_{0,*}\right)z
=
z''+\eta z'+(\alpha+2c)z
+b_*z(t-\tau_{0,*})
-2c\cos(\vartheta_\ell)z(t-\tau_c).
\]

\subsection{Nonresonant perturbation of characteristic exponents}

Suppose that the unperturbed characteristic equation in the \(\ell\)-th spatial direction has a purely imaginary root
\[
\Delta_\ell(i\mathfrak k,b_*,\tau_{0,*})=0.
\]
We seek a perturbed characteristic exponent in the form
\[
\nu(\varepsilon)=i\mathfrak k+\varepsilon^2\nu_2+O(\varepsilon^3).
\]
The oscillatory factor \(\cos(2m_*t)\) couples \(e^{i\mathfrak kt}\) to \(e^{i(\mathfrak k+2m_*)t}\) and \(e^{i(\mathfrak k-2m_*)t}\). We call the perturbation nonresonant when
\[
\Delta_\ell(i(\mathfrak k+2m_*),b_*,\tau_{0,*})\neq0,\qquad \Delta_\ell(i(\mathfrak k-2m_*),b_*,\tau_{0,*})\neq0.
\]
Under this assumption, the oscillatory correction does not contribute to the leading shift of the exponent.

Expanding the characteristic function at \(\nu=i\mathfrak k\) and including the order-\(\varepsilon^2\) parameter and averaged cubic corrections gives
\[
\partial_\nu\Delta_\ell(i\mathfrak k,b_*,\tau_{0,*})\,\nu_2+b_2e^{-i\mathfrak k\tau_{0,*}}-i\mathfrak kb_*\tau_2e^{-i\mathfrak k\tau_{0,*}}+\frac{3\beta}{2}C_\ell=0.
\]
Thus
\begin{equation}\label{eq:new-nu2}
\nu_2=-\frac{b_2e^{-i\mathfrak k\tau_{0,*}}-i\mathfrak kb_*\tau_2e^{-i\mathfrak k\tau_{0,*}}+\frac{3\beta}{2}C_\ell}{\partial_\nu\Delta_\ell(i\mathfrak k,b_*,\tau_{0,*})}.
\end{equation}

For the present characteristic function,
\[
\partial_\nu\Delta_\ell(\nu,b,\tau_0)=2\nu+\eta-b\tau_0e^{-\nu\tau_0}+2c\cos(\vartheta_\ell)\tau_c e^{-\nu\tau_c}.
\]
Hence
\[
\partial_\nu\Delta_\ell(i\mathfrak k,b_*,\tau_{0,*})=2i\mathfrak k+\eta-b_*\tau_{0,*}e^{-i\mathfrak k\tau_{0,*}}+2c\cos(\vartheta_\ell)\tau_c e^{-i\mathfrak k\tau_c}.
\]

The corresponding Floquet multiplier is
\[
\rho(\varepsilon)=e^{2\pi\nu(\varepsilon)},
\]
and therefore
\[
|\rho(\varepsilon)|=\exp\left(2\pi\varepsilon^2\operatorname{Re}\nu_2+O(\varepsilon^3)\right).
\]
Consequently, \(\operatorname{Re}\nu_2>0\) implies that the corresponding multiplier moves outside the unit circle to leading order, while \(\operatorname{Re}\nu_2<0\) implies that it moves inside the unit circle.

We therefore obtain the following result.

\begin{theorem}[Leading-order Floquet stability test]\label{thm:floquet-stability-test}
Assume that
\[
\Delta_\ell(i\mathfrak k,b_*,\tau_{0,*})=0
\]
for some integer \(\mathfrak k\), and assume that
\[
\Delta_\ell(i(\mathfrak k+2m_*),b_*,\tau_{0,*})\neq0,\qquad \Delta_\ell(i(\mathfrak k-2m_*),b_*,\tau_{0,*})\neq0,
\]
and
\[
\partial_\nu\Delta_\ell(i\mathfrak k,b_*,\tau_{0,*})\neq0.
\]
Then the corresponding characteristic exponent has the expansion
\[
\nu(\varepsilon)=i\mathfrak k+\varepsilon^2\nu_2+O(\varepsilon^3),
\]
where
\[
\nu_2=-\frac{b_2e^{-i\mathfrak k\tau_{0,*}}-i\mathfrak kb_*\tau_2e^{-i\mathfrak k\tau_{0,*}}+\frac{3\beta}{2}C_\ell}{2i\mathfrak k+\eta-b_*\tau_{0,*}e^{-i\mathfrak k\tau_{0,*}}+2c\cos(\vartheta_\ell)\tau_c e^{-i\mathfrak k\tau_c}}.
\]
If \(\operatorname{Re}\nu_2>0\), then the bifurcating periodic solution is linearly unstable in this direction to leading order. If \(\operatorname{Re}\nu_2<0\), then the corresponding nontrivial Floquet multiplier moves inside the unit circle to leading order.
\end{theorem}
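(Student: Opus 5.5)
The plan is a regular perturbation argument for a simple characteristic root of the projected variational equation. I would treat it as a perturbation of the constant-coefficient delay operator \(\Delta_\ell(d/dt,b_*,\tau_{0,*})\). First, I would fix the setting. Under the invariance assumption \(q_{\ell_*}^{\,2}q_\ell=C_\ell q_\ell\), the variational equation \eqref{eq:new-variational-equation} restricted to \(w=q_\ell z\) reduces, up to \(O(\varepsilon^3)\), to the scalar projected equation derived above:
\[
\Delta_\ell\Bigl(\frac{d}{dt}\Bigr)z+\varepsilon^2 L_2 z+O(\varepsilon^3)=0,
\]
where
\[
L_2 z=\tfrac{3\beta}{2}C_\ell\bigl(1+\cos(2m_*t)\bigr)z+b_2z(t-\tau_{0,*})-b_*\tau_2z'(t-\tau_{0,*}).
\]
Floquet solutions have the form \(z(t)=e^{\nu t}\pi(t)\) with \(\pi\) \(2\pi\)-periodic. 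Writing \(\pi(t)=\sum_j \pi_j e^{ijt}\), the equation becomes an infinite system in which \(\Delta_\ell(\nu+ij)\) acts diagonally and \(\cos(2m_*t)\) couples index \(j\) to \(j\pm 2m_*\).

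Second, I would apply Lyapunov--Schmidt reduction onto the critical Fourier index \(j=0\). I normalize \(\nu=i\mathfrak k+\varepsilon^2\nu_2+\dots\) with \(\pi_0=1\), so that for \(\varepsilon=0\) the function \(e^{i\mathfrak k t}\) is a solution. The simplicity hypothesis \(\partial_\nu\Delta_\ell(i\mathfrak k)\neq0\) makes \(i\mathfrak k\) an isolated simple zero of \(\Delta_\ell\). The nonresonance hypotheses say \(\Delta_\ell(i(\mathfrak k\pm2m_*))\neq0\). For the indices \(j\neq0\), I would then solve \(\pi_j=O(\varepsilon^2)\) by inverting \(\Delta_\ell(i\mathfrak k+ij)\). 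This needs \(\Delta_\ell(i(\mathfrak k+j))\neq0\) for all \(j\neq0\), and some uniform invertibility for large \(|j|\), which holds since \(|\Delta_\ell(i\omega)|\sim\omega^2\). Strictly, the hypotheses only cover \(j=\pm2m_*\). For other \(j\) I would either add the (implicitly assumed) genericity that no other \(i(\mathfrak k+j)\) is a root, or note that those modes are coupled to \(j=0\) only at order \(\varepsilon^4\) and higher. Substituting back into the \(j=0\) equation gives a scalar equation
\[
\Delta_\ell(\nu)+\varepsilon^2\bigl[\tfrac{3\beta}{2}C_\ell+b_2e^{-\nu\tau_{0,*}}-\nu b_*\tau_2e^{-\nu\tau_{0,*}}\bigr]+O(\varepsilon^4)+O(\varepsilon^3)=0.
\]
The \(\cos(2m_*t)\) term contributes only via \(\pi_{\pm2m_*}\), hence at order \(\varepsilon^4\). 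This is precisely where nonresonance is used.

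Third, I would apply the implicit function theorem in \((\nu,\varepsilon^2)\) at \((i\mathfrak k,0)\), using \(\partial_\nu\Delta_\ell(i\mathfrak k)\neq0\). This gives a \(C^1\) branch \(\nu(\varepsilon)\). Differentiating at \(\varepsilon=0\) yields \eqref{eq:new-nu2}, and the explicit derivative of \(\Delta_\ell\) gives the stated denominator. Finally, \(\rho=e^{2\pi\nu}\) gives \(|\rho|=\exp(2\pi\varepsilon^2\operatorname{Re}\nu_2+O(\varepsilon^3))\), so the sign of \(\operatorname{Re}\nu_2\) decides whether the multiplier moves outside or inside the unit circle for small \(\varepsilon\). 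For the outward case, \(|\rho|>1\) yields linear instability by the criterion cited from \cite{HaleVerduynLunel1993}.

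The main obstacle is making the reduction rigorous in the infinite-dimensional delay setting. This means controlling the \(O(\varepsilon^3)\) remainders, which come from the \(O(\varepsilon^2)\) parts of \(u_*\) and from the expansion of the delay, uniformly as perturbations of a Fredholm operator on \(H^2_{\mathrm{per}}\). It also requires the off-critical Fourier multipliers to stay boundedly invertible. I would handle this by working in the weighted space with \(z=e^{\nu t}\pi\), treating \(\pi\mapsto \Delta_\ell(\nu+d/dt)\pi\) as a bounded map from \(\mathcal X\) to \(\mathcal Y\) with a one-dimensional kernel, much as \(\mathcal A\) was handled in Lemma~\ref{lem:A-isomorphism}. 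The perturbation terms are bounded and analytic in \(\nu\), which makes the implicit function argument standard.
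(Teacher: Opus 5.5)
Your route is the paper's route. There too $i\mathfrak k$ is treated as a simple root, the $\cos(2m_*t)$ term is dropped by nonresonance, and $\nu_2$ is read off from the first-order expansion of $\Delta_\ell$, all formally. Your Fourier--Floquet computation reproduces the formula correctly, but the step meant to make it rigorous fails; the paper's derivation is silent on the same point.

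\textbf{The gap.} Because $\Delta_\ell$ has real coefficients, $\Delta_\ell(-i\mathfrak k,b_*,\tau_{0,*})=\overline{\Delta_\ell(i\mathfrak k,b_*,\tau_{0,*})}=0$. So the index $j=-2\mathfrak k$ always satisfies $\Delta_\ell(i(\mathfrak k+j))=0$. Moreover $\mathfrak k\neq0$ is forced, since $\Delta_\ell(0,b_*,\tau_{0,*})=\alpha+b_*+2c(1-\cos\vartheta_\ell)>0$. Nonresonance gives exactly $-2\mathfrak k\neq\pm2m_*$, so your hypotheses never cover this index. The consequences are:
\begin{itemize}
\item $\pi\mapsto\Delta_\ell(i\mathfrak k+d/dt)\pi$ never has a one-dimensional kernel; it contains $1$ and $e^{-2i\mathfrak kt}$.
\item $\pi_{-2\mathfrak k}$ cannot be found by inverting $\Delta_\ell(\nu-2i\mathfrak k)$, which is itself $O(\varepsilon^2)$.
\item Your ``genericity'' patch would make the theorem vacuous.
\item Your weak-coupling remark controls the off-diagonal terms, not this vanishing diagonal entry.
\end{itemize}
In Floquet terms, $e^{i\mathfrak kt}$ and $e^{-i\mathfrak kt}$ both have multiplier $1$. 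So $\rho=1$ is at least a double multiplier of the unperturbed problem, and $\partial_\nu\Delta_\ell(i\mathfrak k)\neq0$ does not make it simple. A scalar implicit function argument in $\nu$ is therefore not available.

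\textbf{The repair} stays inside your framework. Reduce onto the finite set $\mathcal J=\{j\in\mathbb Z:\Delta_\ell(i(\mathfrak k+j),b_*,\tau_{0,*})=0\}$ and eliminate the other indices as you propose. Then set $\nu=i\mathfrak k+\varepsilon^2\mu$ and divide by $\varepsilon^2$. At $\varepsilon=0$ the reduced matrix has diagonal entries $\partial_\nu\Delta_\ell(i(\mathfrak k+j))\mu+D_j$, where $D_j$ collects the averaged cubic and parameter terms evaluated at $i(\mathfrak k+j)$. Its off-diagonal entries are nonzero only between indices differing by $\pm2m_*$.

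Nonresonance says exactly that no element of $\mathcal J$ lies at distance $2m_*$ from $0$. So the index $0$ decouples, and the leading determinant contains the factor $\partial_\nu\Delta_\ell(i\mathfrak k)\mu+D_0$, whose root is your $\nu_2$. The conjugate index contributes the root $\overline{\nu_2}$.

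If $\nu_2$ is a simple root of the full leading determinant, the implicit function theorem gives the stated expansion; this requires in particular $\operatorname{Im}\nu_2\neq0$. Otherwise, e.g.\ for real $\nu_2$, a Rouch\'e-type continuity-of-roots argument still gives $\operatorname{Re}\nu=\varepsilon^2\operatorname{Re}\nu_2+o(\varepsilon^2)$. That is all the sign conclusion needs, though the $O(\varepsilon^3)$ remainder then requires extra structure.
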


\subsection{Resonant mode coupling}

The nonresonance condition fails when
\[
\Delta_\ell(i(\mathfrak k+2m_*),b_*,\tau_{0,*})=0
\]
or
\[
\Delta_\ell(i(\mathfrak k-2m_*),b_*,\tau_{0,*})=0.
\]
In this case, the oscillatory coefficient with frequency \(2m_*\) couples the mode \(\mathfrak k\) to another critical temporal mode. The correction must then be computed on the corresponding finite-dimensional resonant subspace rather than from the scalar formula \eqref{eq:new-nu2}. The stability criterion derived above is valid under the stated nonresonance condition. If resonance occurs, the coupled critical modes require a separate finite-dimensional reduction, which is beyond the scope of the present analysis.

\section{Conclusion}

This paper developed an equivariant bifurcation framework for periodic inventory oscillations in delayed ring supply-chain networks. The model was reduced to a second-order delay equation and reformulated as an equivariant compact perturbation of the identity, incorporating the spatial symmetry of the ring, sign symmetry, and time translations.

Critical parameter pairs were identified through purely imaginary characteristic roots, and the associated crossing numbers and basic equivariant degrees were combined to construct a local bifurcation invariant. A nonzero invariant guarantees nonconstant periodic solutions and provides information about their possible spatial and spatio-temporal symmetry types. By restricting the two-parameter problem to a transversal curve, a global continuation alternative was also established.

The \(D_7\)- and \(D_8\)-symmetric examples illustrate the computation of critical parameters, crossing directions, and bifurcation invariants. The results predict global branches of nonsynchronous, wave-like inventory oscillations, with numerical continuation used to illustrate the \(D_7\) case.

Finally, Floquet theory was used to study the leading-order stability of small-amplitude periodic branches. Under a nonresonance condition, the behavior of nontrivial Floquet multipliers near the unit circle can be determined from the leading correction to the characteristic exponent.

Future work may extend the Floquet analysis to resonant mode interactions, where several critical temporal modes must be treated simultaneously. It would also be of interest to investigate more general network topologies, heterogeneous warehouses, multiple or distributed delays, and models incorporating time-dependent demand. Such extensions may further clarify how network structure and delayed feedback influence the formation, stability, and propagation of inventory oscillations.

\appendix

\section{Basic Terminology from Equivariant Theory}

We recall several standard notions used throughout the paper. Let \(G\) be a compact Lie group and let \(H\leq G\) be a closed subgroup. The normalizer of \(H\) in \(G\) is denoted by
\[
N(H):=\{g\in G:gHg^{-1}=H\}.
\]
The corresponding Weyl group is
\[
W(H):=N(H)/H.
\]
We write \((H)\) for the conjugacy class of \(H\) in \(G\). The family of all conjugacy classes of closed subgroups of \(G\) is denoted by \(\Phi(G)\).

The set \(\Phi(G)\) is partially ordered by declaring
\[
(H)\leq (K)
\]
whenever there exists \(g\in G\) such that
\[
gHg^{-1}\leq K.
\]
For \(k\geq 0\), we define
\[
\Phi_k(G):=\{(H)\in\Phi(G):\dim W(H)=k\}.
\]
In particular,
\[
\Phi_0(G):=\{(H)\in\Phi(G):\dim W(H)=0\}
\]
is the collection of orbit types with finite Weyl group.

Let \(X\) be a \(G\)-space. For a point \(x\in X\), its isotropy subgroup is
\[
G_x:=\{g\in G:gx=x\}.
\]
The orbit of \(x\) is
\[
G(x):=\{gx:g\in G\}.
\]
Thus \(G(x)\) is naturally identified with the homogeneous space \(G/G_x\). The quotient space formed by all \(G\)-orbits in \(X\), endowed with the quotient topology, is denoted by \(X/G\) and is called the orbit space.

The conjugacy class \((G_x)\) is called the orbit type of \(x\). We write
\[
\Phi(G;X):=\{(H)\in\Phi(G):H=G_x \text{ for some } x\in X\}
\]
for the set of orbit types occurring in \(X\). If \(H\leq G\) is a closed subgroup, we use the notation
\[
X_H:=\{x\in X:G_x=H\}
\]
and
\[
X^H:=\{x\in X:G_x\geq H\}.
\]
The space \(X^H\) is the \(H\)-fixed-point subspace of \(X\).

If \(X\) and \(Y\) are \(G\)-spaces, a continuous map \(f:X\to Y\) is called \(G\)-equivariant if
\[
f(gx)=gf(x)
\]
for all \(g\in G\) and \(x\in X\). When the action of \(G\) on \(Y\) is trivial, the same condition reduces to
\[
f(gx)=f(x),
\]
and in this case \(f\) is called \(G\)-invariant.

Finally, if \(L\leq H\leq G\), define
\[
N(L,H):=\{g\in G:gLg^{-1}\leq H\}.
\]
For \((L),(H)\in\Phi_0(G)\), the number
\[
n(L,H):=\left|N(L,H)/N(H)\right|
\]
is finite and well defined. Equivalently, \(n(L,H)\) counts the number of subgroups \(H'\leq G\) that are conjugate to \(H\) and contain \(L\). Further details and properties of this number can be found in \cite{AED}.

\section{The Burnside Ring \(A_0(G)\)}

Let \(G\) be a compact Lie group. The Burnside ring of \(G\) is the free abelian group generated by the conjugacy classes in \(\Phi_0(G)\). More precisely, we set
\[
A_0(G):=\mathbb Z[\Phi_0(G)].
\]
Hence an element \(a\in A_0(G)\) has the form
\[
a=n_1(H_1)+\cdots+n_m(H_m),
\]
where \(n_i\in\mathbb Z\) and \((H_i)\in\Phi_0(G)\).

The additive structure of \(A_0(G)\) is the usual formal addition of such finite sums. The multiplication is defined by orbit-type decomposition. Namely, for \((H),(K)\in\Phi_0(G)\), one sets
\[
(H)\cdot (K)=\sum_{(L)\in\Phi_0(G)}n_L(L),
\]
where \(n_L\) is the number of \(G\)-orbits of type \((L)\) in the product space \(G/H\times G/K\), equipped with the diagonal \(G\)-action. With this product, \(A_0(G)\) becomes a ring whose identity element is \((G)\).

The partial order on \(\Phi_0(G)\) makes it possible to compute the coefficients \(n_L\) recursively. The multiplication coefficients are given by
\[
n_L=\frac{n(L,H)|W(H)|n(L,K)|W(K)|-\sum_{(\widetilde L)>(L)}n(L,\widetilde L)n_{\widetilde L}|W(\widetilde L)|}{|W(L)|}.
\]
This formula is especially useful in explicit computations of products in \(A_0(G)\).

\section{\(G\)-Equivariant Brouwer Degree}

\label{subsec:G-degree}

Let \(V\) be an orthogonal \(G\)-representation, and let \(f:V\to V\) be a \(G\)-equivariant map. Let \(\Omega\subset V\) be open, bounded, and \(G\)-invariant. The map \(f\) is said to be \(\Omega\)-admissible if
\[
f(x)\neq 0
\]
for every \(x\in \partial\Omega\). In this situation, the pair \((f,\Omega)\) is called an admissible \(G\)-pair. We denote the class of all admissible \(G\)-pairs by \(\mathcal M^G\).

The equivariant Brouwer degree is characterized by a collection of axioms. We state the version needed in this paper; see \cite{AED} for a detailed treatment.

\begin{theorem}\label{thm:GpropDeg}
There exists a unique map
\[
\deg_G:\mathcal M^G\to A_0(G),
\]
called the \(G\)-equivariant Brouwer degree, such that for every admissible \(G\)-pair \((f,\Omega)\),
\[
\deg_G(f,\Omega)=\sum_{(H)\in\Phi_0(G)}n_H(H),
\]
and the following properties hold.

\begin{itemize}
\item[\rm (G1)] \textbf{Existence.}
If
\[
\deg_G(f,\Omega)\neq 0,
\]
then \(f\) has at least one zero in \(\Omega\). More precisely, if the coefficient of \((H)\) is nonzero, then there exists \(x\in\Omega\) such that
\[
f(x)=0
\]
and
\[
(G_x)\geq (H).
\]

\item[\rm (G2)] \textbf{Additivity.}
Let \(\Omega_1\) and \(\Omega_2\) be disjoint open \(G\)-invariant subsets of \(\Omega\). If
\[
f^{-1}(0)\cap\Omega\subset \Omega_1\cup\Omega_2,
\]
then
\[
\deg_G(f,\Omega)=\deg_G(f,\Omega_1)+\deg_G(f,\Omega_2).
\]

\item[\rm (G3)] \textbf{Homotopy invariance.}
If \(h:[0,1]\times V\to V\) is a \(G\)-equivariant homotopy such that \(h_t\) is \(\Omega\)-admissible for every \(t\in[0,1]\), then
\[
\deg_G(h_t,\Omega)
\]
is independent of \(t\).

\item[\rm (G4)] \textbf{Normalization.}
If \(\Omega\) is an open bounded \(G\)-invariant neighborhood of \(0\) in \(V\), then
\[
\deg_G(\id,\Omega)=(G).
\]

\item[\rm (G5)] \textbf{Product property.}
For two admissible \(G\)-pairs \((f_1,\Omega_1)\) and \((f_2,\Omega_2)\),
\[
\deg_G(f_1\times f_2,\Omega_1\times\Omega_2)=\deg_G(f_1,\Omega_1)\cdot \deg_G(f_2,\Omega_2),
\]
where the product on the right-hand side is the product in the Burnside ring \(A_0(G)\).

\item[\rm (G6)] \textbf{Suspension.}
Let \(W\) be another orthogonal \(G\)-representation, and let \(\mathscr B\subset W\) be an open bounded \(G\)-invariant neighborhood of \(0\). Then
\[
\deg_G(f\times \id_W,\Omega\times\mathscr B)=\deg_G(f,\Omega).
\]

\item[\rm (G7)] \textbf{Recurrence formula.}
If
\[
\deg_G(f,\Omega)=\sum_{(H)\in\Phi_0(G)}n_H(H),
\]
then the coefficients \(n_H\) are determined recursively by
\[
n_H=\frac{\deg(f^H,\Omega^H)-\sum_{(K)>(H)}n_K\,n(H,K)\,|W(K)|}{|W(H)|}.
\]
Here \(f^H=f|_{V^H}\), \(\Omega^H=\Omega\cap V^H\), and \(\deg(f^H,\Omega^H)\) denotes the classical Brouwer degree on the fixed-point subspace \(V^H\).
\end{itemize}
\end{theorem}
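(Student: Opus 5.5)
The plan is the standard one from \cite{AED}: construct \(\deg_G\) first for regular maps by orbit-type counting, extend it by approximation, and then check (G1)--(G7) and uniqueness directly from the construction. The first step is to define \(\deg_G\) on \emph{regular normal} admissible pairs. Call \(f\) regular normal if \(0\) is a regular value of each restriction \(f^H\colon V^H\to V^H\) and \(f\) is \(G\)-normal near its zeros, i.e.\ near each zero orbit the derivative acts as the identity in directions normal to \(V_H\). For such a pair the zero set in \(\Omega\) is a finite union of orbits \(G(x)\). For \(x\) with \((G_x)\in\Phi_0(G)\) I record the local Brouwer index \(\operatorname{sign}\det D(f^{G_x})(x)\) on \(V^{G_x}\). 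I then set \(n_H\) to be the sum of these indices over the zero orbits of type \((H)\). Orbits with \(\dim W(G_x)>0\) contribute nothing; their indices cancel, as one sees by an \(S^1\)-rotation argument inside \(V^{G_x}\).

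The second step is a density and homotopy statement. Using the equivariant Gleason--Tietze extension and a stratum-by-stratum transversality argument, descending through \(\Phi(G;\Omega)\) ordered by \(\leq\), every admissible \(f\) is admissibly homotopic to a regular normal map. Any two such approximations are joined by a regular normal homotopy, so the degree does not depend on the choice. I expect this to be the main obstacle. Normality must be enforced on each orbit-type stratum \(\Omega_H\) while keeping regularity on the larger fixed-point spaces, and this requires a careful inductive construction over the finite poset of orbit types. Compactness of \(\overline\Omega\) guarantees that this poset is finite. The hardest sub-point is the cobordism argument showing that \(n_H\) is invariant along a generic homotopy. I would handle it by working in \(V^H/W(H)\) on the free stratum, where \(W(H)\) acts freely, and applying ordinary degree theory there.

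Once the degree is defined, most of the axioms are immediate. (G1) follows from the construction: if \(n_H\neq0\), a regular approximation has a zero of type \((H)\), and a limit argument gives a zero of \(f\) with \((G_x)\geq(H)\). (G2) and (G3) are inherited from finite additivity of the index sum and from the invariance just established. (G4) holds because \(\id\) has the single zero \(0\), which has isotropy \(G\) and index \(1\). (G6) follows since \(\id_W\) adds only the normal direction \(W\), which preserves both normality and the signs.

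For (G5) I would compare the zero orbits of \(f_1\times f_2\) with orbits in \(G/H\times G/K\). This matches exactly the definition of the Burnside product. The counting formula for \(n_L\) recalled above confirms that the coefficients agree. (G7) comes from applying the classical Brouwer degree to \(f^H\) on \(V^H\). The zeros in \(\Omega^H\) are exactly the zeros of types \((K)\geq(H)\). An orbit of type \((K)\) meets \(V^H\) in \(n(H,K)\,|W(K)|\) points, and each of these points carries the same Brouwer index because the map is normal. Summing gives \(\deg(f^H,\Omega^H)=\sum_{(K)\geq(H)}n_K\,n(H,K)\,|W(K)|\). Solving for \(n_H\) then gives (G7).

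Finally, (G7) also proves uniqueness. It shows that any map satisfying the axioms is determined by the classical degrees of all the fixed-point restrictions \(f^H\).
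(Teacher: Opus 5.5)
The paper gives no proof of its own here. It quotes the theorem from the Balanov--Krawcewicz--Steinlein monograph, and your outline follows the construction given there. For a finite group \(G\) your outline is essentially right.

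The statement, however, is for a compact Lie group, and the paper's \(G=D_n\times\mathbb Z_2\times S^1\) is positive-dimensional. In that setting your definition of a regular normal map, as you state it, cannot be met:

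\begin{enumerate}
\item Zeros of an equivariant map fill whole orbits \(G(x)\cong G/G_x\). When these are positive-dimensional, \(0\) is not a regular value of \(f^{\{e\}}=f\) on \(V\).
\item When \(W(G_x)\) is finite, \(T_xG(x)\subseteq (V^{G_x})^{\perp}\). So \(Df(x)\), which vanishes on \(T_xG(x)\), cannot be the identity on the directions normal to \(V_{G_x}\).
\end{enumerate}

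This is forced, not a matter of genericity. Take \(O(2)\) acting on \(\mathbb C\). A normal map admissibly homotopic to \(-\id\) on a ball \(B\) equals \(\id\) near \(0\), since \(V^{O(2)}=\{0\}\). Its odd restriction to \(V^{D_1}=\mathbb R\) has Brouwer degree \(-1\) but index \(+1\) at \(0\), so it vanishes at some \(x_0\neq0\). Hence the map vanishes on a whole circle of orbit type \((D_1)\), consistent with \(\deg_G(-\id,B)=(O(2))-(D_1)\). So your class of regular normal maps is not dense, and Step 2 fails as formulated.

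The missing idea is the compact-Lie form of the definitions. If \((H)\in\Phi_0(G)\) and \(H\le K\), then \(C(K)^0\subseteq C(H)^0\subseteq H\subseteq K\), where \(C\) denotes the centralizer. So \(W(K)\) is finite. Consequently:

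\begin{itemize}
\item Every zero lying in \(V^H\) has orbit type in \(\Phi_0(G)\), and \(G(x)\cap V^H\) consists of \(n(H,G_x)\,|W(G_x)|\) points.
\item You should require regularity of \(f^H\) only for \((H)\in\Phi_0(G)\), and normality only on the slice orthogonal to \(V^{G_x}\oplus T_xG(x)\).
\item Since \(T_xG(x)\perp V^H\), the index of \(f^H\) at \(x\) then equals that of \(f^{G_x}\).
\item Zeros with \(\dim W(G_x)>0\) have no pointwise index, so your ``\(S^1\)-rotation cancellation'' is not meaningful. It is also unnecessary: such zeros lie in no \(V^H\) with \((H)\in\Phi_0(G)\).
\end{itemize}

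With these definitions your counting proof of (G7) works for regular normal maps. That counting identity already gives independence of the approximation and homotopy invariance, directly from the classical degrees of the \(f^H\). So the cobordism step you single out as the main obstacle can be skipped.

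For (G5), direct orbit counting is not available. The map \(f_1\times f_2\) vanishes on \(G(x_1)\times G(x_2)\), which is in general a continuum of diagonal orbits, so it is not normal. Derive (G5) from (G7) instead, using three facts:
\begin{itemize}
\item \((V_1\oplus V_2)^L=V_1^L\oplus V_2^L\);
\item the classical degree is multiplicative;
\item the marks \((H)\mapsto n(L,H)|W(H)|\) are jointly injective ring homomorphisms on \(A_0(G)\), which is exactly what the recalled formula for \(n_L\) encodes.
\end{itemize}
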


The same construction also admits an infinite-dimensional version, namely the \(G\)-equivariant Leray--Schauder degree for compact perturbations of the identity in Banach \(G\)-spaces; see \cite{AED}.

\section{Basic Degrees and the Degree of Linear Maps}

Let \(\epsilon>0\) is sufficient small, and define
\[
B_\epsilon(V):=\{x\in V:|x|<\epsilon\}
\]
be the \(\epsilon\)-ball in a finite-dimensional orthogonal \(G\)-representation \(V\). For an irreducible \(G\)-representation \(\mathcal V_i\), the associated basic \(G\)-degree is defined by
\[
\deg_{\mathcal V_i}:=\deg_G(-\id,B_\epsilon(\mathcal V_i)).
\]

Now let \(T:V\to V\) be a \(G\)-equivariant linear isomorphism. Suppose that \(V\) decomposes into \(G\)-isotypical components as
\[
V=\bigoplus_i V_i.
\]
Let \(T_i:=T|_{V_i}\). If \(\mu\in\Sigma(T)\) is a negative eigenvalue, denote its eigenspace by \(E(\mu)\). Let \(m_i(\mu)\) be the multiplicity with which the representation \(\mathcal V_i\) occurs in \(E(\mu)\). Then the product property of the \(G\)-degree gives
\[
\deg_G(T,B_\epsilon(V))=\prod_i \deg_G(T_i,B_\epsilon(V_i)).
\]
Equivalently,
\[
\deg_G(T,B_\epsilon(V))=\prod_i\prod_{\mu\in\Sigma_-(T)}\left(\deg_{\mathcal V_i}\right)^{m_i(\mu)},
\]
where \(\Sigma_-(T)\) denotes the set of negative eigenvalues of \(T\).

The coefficients of each basic degree can be computed from the recurrence formula. Namely, if
\[
\deg_{\mathcal V_i}=\sum_{(H)\in\Phi_0(G)}n_H(H),
\]
then
\[
n_H=\frac{(-1)^{\dim \mathcal V_i^H}-\sum_{(K)>(H)}n_K\,n(H,K)\,|W(K)|}{|W(H)|}.
\]
This expression follows from applying the recurrence formula to the map \(-\id\) on the irreducible representation \(\mathcal V_i\), since the ordinary Brouwer degree of \(-\id\) on the fixed-point space \(\mathcal V_i^H\) is
\[
(-1)^{\dim \mathcal V_i^H}.
\]

\section{Computational Formula for One-Parameter Basic Maps}

We also recall the computational formula used for basic maps arising from one-parameter \(S^1\)-type components. Let \(\mathcal V_{j,1}\) be the representation under consideration, and assume that the relevant twisted orbit types form the finite partially ordered set
\[
\Phi_1(G,\mathcal V_{j,1})=\{(H_1),(H_2),\ldots,(H_m)\}.
\]
Choose a total ordering compatible with the partial order, written as
\[
(H_1)>(H_2)>\cdots>(H_m).
\]

The corresponding basic degree has the form
\[
\deg_{\mathcal V_{j,1}}=\sum_{(H)\in\Phi_1(G)}n_H(H).
\]
The coefficients are obtained recursively. For each \(k=1,\ldots,m\),
\[
n_{H_k}=
\frac{\deg_1(f^{H_k},\Omega^{H_k})-\sum_{l=1}^{k-1}n_{H_l}n(H_k,H_l)|W(H_l)/S^1|}{|W(H_k)/S^1|}.
\]
Here \(\deg_1(f^{H_k},\Omega^{H_k})\) denotes the coefficient corresponding to \((\mathbb Z_1)\) in the \(S^1\)-degree of the restricted map on the fixed-point space. In the present situation this coefficient is given by
\[
\deg_1(f^{H_k},\Omega^{H_k})=\frac{1}{2}\dim \mathcal V_{j,1}^{H_k}.
\]
Therefore, once the values \(n(L,H)\), the Weyl-group orders, and the dimensions of the fixed-point spaces \(\mathcal V_{j,1}^{H_k}\) are known, the twisted basic degree can be computed recursively.

\end{document}